\newcommand{\envelope}{(\kern1pt\Letter\kern1pt)}
\newtheorem{theorem}{Theorem}
\newtheorem{lemma}{Lemma}
\newtheorem{proposition}{Proposition}
\newtheorem{corollary}{Corollary}
\newtheorem{remark}{Remark}
\journal{}
\renewcommand{\headrulewidth}{0.0pt}
\DeclareMathOperator{\im}{ran}
\DeclareMathOperator*{\spanof}{span}
\DeclareMathOperator*{\supp}{supp}
\def\divdifsymb{\scaleobj{1.2}{\mathord{\kern.34em\vrule width.6pt height6.3pt depth-.28pt \kern-.34em\Delta}}}
\newcommand{\divdif}[1]{\divdifsymb({#1})}
\newcommand{\setof}[1]{\left\{{#1}\right\}}
\newcommand{\cset}[2]{\setof{#1\,:\,#2}}
\newcommand{\ivcc}[2]{\left[#1,#2\right]}
\newcommand{\ivco}[2]{\left[#1,#2\right)}
\newcommand{\ivoo}[2]{\left(#1,#2\right)}
\newcommand{\ivu}{\ivcc{0}{1}}
\newcommand{\abs}[1]{\left|#1\right|}
\newcommand{\norm}[1]{\left\Vert#1\right\Vert}
\newcommand{\normi}[1]{\left\Vert#1\right\Vert_\infty}
\newcommand{\normp}[1]{\left\Vert#1\right\Vert_p}
\newcommand{\normop}[1]{\left\Vert#1\right\Vert_{op}}
\newcommand{\spacecf}{C(\ivcc{0}{1})}
\newcommand{\spacedf}[1]{C^{{#1}}(\ivcc{0}{1})}
\newcommand{\spacepp}{\mathcal{S}(\Delta_n, k)}
\newcommand{\spacelpo}{L^p(\Omega)}
\newcommand{\spaceh}[2]{{X^{#2, #1}(\Omega)}}
\newcommand{\spacehzp}{\spaceh{0}{p}}
\newcommand{\spacehrp}{\spaceh{r}{p}}
\newcommand{\spacehzi}{\spaceh{0}{\infty}}
\newcommand{\spacehri}{\spaceh{r}{\infty}}
\newcommand{\spacew}[2]{{W^{#2,#1}(\Omega)}}
\newcommand{\spacewrp}{\spacew{r}{p}}
\newcommand{\mosrp}{\omega_{r,p}}
\newcommand{\kfuncrp}{K_{r,p}}
\newcommand{\seminrp}[1]{\abs{#1}_{r,p}}
\newcommand{\ballo}[2]{B({#1},{#2})}
\newcommand{\uballo}{\ballo{0}{1}}
\newcommand{\Nn}{{\mathbb N}}
\newcommand{\Rr}{{\mathbb R}}
\newcommand{\Cc}{{\mathbb C}}
\newcommand{\Sdk}{S_{{\Delta_n}, k}\,}
\newcommand{\Sdkk}{S_{{\Delta_n}, k+1}\,}
\newcommand{\ISdk}{V_{{\Delta_n}, k}\,}
\newcommand{\ISdkm}[1]{V_{{\Delta_n}, {k-#1}}\,}
\newcommand{\xik}[1]{\xi_{{{#1}}, k}}
\newcommand{\xikk}[1]{\xi_{{{#1}}, k+1}}
\newcommand{\xijk}{\xi_{j, k}}
\newcommand{\bspk}[1]{N_{{#1}, k}}
\newcommand{\bspmk}[1]{M_{{#1}, k}}
\newcommand{\meshmin}{\abs{\Delta_n}_{\mathrm{min}}}
\newcommand{\id}{I}
\newcommand{\linope}[1]{\mathcal{L}(#1)}
\newcommand{\boundope}[1]{\mathcal{L}(#1)}
\newcommand{\compope}[1]{\mathcal{K}(#1)}
\newcommand{\spec}[1]{\sigma(#1)}
\newcommand{\ie}{i.\,e.}
\newcommand*{\polkn}{e_{k}}
\newcommand*{\funkn}{\alpha^*_{k}}
\newcommand*{\poljn}[1]{e_{#1}}
\begin{document}

\begin{frontmatter}
  \title{Lower estimates for linear operators with smooth range}
  \author{Johannes~Nagler}
  \ead{johannes.nagler@uni-passau.de}

  \address{Fakult\"at f\"ur Informatik und Mathematik, Universit\"at Passau, Germany}

  \begin{abstract}
  We introduce a new method to prove lower estimates for the approximation error
  of general linear operators with smooth range in terms of classical moduli of
  smoothness and related $K$-functionals. In addition, we explicitly show how to
  derive lower estimates for positive linear operators with smooth range and
  apply this result to classical approximation operators. We finish with some
  remarks on the eigenvalues of Schoenberg's spline operator. 
  \end{abstract}

  \begin{keyword}
    Converse inequality \sep positive linear operator \sep modulus of smoothness \sep
    $K$-functional \sep Bernstein polynomials \sep splines
   \end{keyword}
  
\end{frontmatter}
\fancypagestyle{pprintTitle}{%
\lhead{} \chead{}\rhead{}
\lfoot{}\cfoot{}\rfoot{{\footnotesize\itshape \hfill Preprint, \today}}
\renewcommand{\headrulewidth}{0.0pt}
}

\section{Introduction}
A convenient way to relate the decay rate of a sequence of approximations $T_n$
on a Banach space $X$ with the smoothness of the approximated function
$f \in X $ is to establish lower estimates in terms of classical moduli of
smoothness and related $K$-functionals: There
exists constants $C_1,C_2 > 0$ independent on $n$ such that
 \begin{equation*}
     C_1 \cdot \omega_r(f, \delta_n) \leq \norm{T_nf - f} \quad\text{and}\quad
     C_2 \cdot K_r(f, \delta_n^r) \leq \norm{T_nf - f}
   \end{equation*}
   holds for all $f \in X$ and $\delta_n \to 0$ for $n \to \infty$. Although
   there exist already several methods to derive such estimates, see e.g.
   \citeauthor{Knoop:1995} (\cite{Knoop:1995}, \cite{Knoop:1994}),
   \citet{MR1253439} and \citet{MR1272123}, these methods still require many
   restrictions and therefore are not applicable for a large number of linear
   operators.

   In this article, we introduce a new method to derive such lower estimates for
   arbitrary compact operators with smooth range satisfying a spectral property.
   The approximation operator can be defined on arbitrary bounded domains
   $\Omega \subset \Rr^d$ with a suitably smooth boundary. As underlying
   function spaces we consider the space of continuous functions and 
   $L^p$-spaces for $1 \leq p < \infty$. Consequently, we use the space of
   $r$-times continuously differentiable functions and classical Sobolev
   spaces as their corresponding smooth subspaces.

   We will prove lower estimates for linear operators based on a functional
   analytic framework depending on the fixed points of the operator and the
   smoothness of the range. The key idea is to estimate the semi-norm occuring
   in the $K$-functional by the approximation error using the convergence of the
   iterates of the operator. The only requirements of this approach are that the
   seminorms of the $K$-functionals are bounded on the range of the
   approximation operator and annihilate its fixed points. It will be shown that
   the degree of the modulus of smoothness or the used $K$-functional depends
   only on the smoothness of the range and the fixed points of $T$. Note that
   these results are an extension of the method shown in \cite{nagler2015joc},
   where lower estimates for Schoenberg's variation diminishing spline operator
   have been shown. Here, we establish a very flexible framework to prove lower
   estimates for very general linear approximation operators.

   We finish this article by discussing applications of these results. First, we
   show how to derive lower estimates for general positive linear operators with
   smooth range. Then, we provide concrete lower estimates for the Bernstein
   operator, the Kantorovi{\v c} operator, the Schoenberg operator and the
   integral Schoenberg operator. As the eigenvalues of the Schoenberg operator
   play an important role in the corresponding lower estimate, we give a
   characterization of them in the end of this article. 

\section{Preliminaries}
Let $\Omega \subset \Rr^d$ be a bounded domain with suitable smooth boundary. 

\subsection{Function spaces}
We use the multi-index notation of \citet{Schwartz:1950} to introduce
derivatives. Accordingly, we denote by $D^\alpha$ the differential
operator\index{Mixed differential operator}
\begin{equation*}
  D^\alpha = \frac{\partial^{\abs{\alpha}}}{\partial x_1^{\alpha_1} \partial x_2^{\alpha_2} \cdots \partial x_n^{\alpha_n}},
\end{equation*}
where $\alpha = (\alpha_1,\ldots,\alpha_n)$ is a multi-index with modulus
$\abs{\alpha} = \sum_{i=1}^n\alpha_i$. We denote by $C^r(\Omega)$ the space of
all complex valued functions $f$ that have continuous and bounded derivatives
$D^\alpha f$ up to order $r$, \ie, $\abs{\alpha} \leq r$. The norm on
$C^r(\Omega)$ is given by $\norm{f} := \sup_{\abs{\alpha} = r} \normi{D^\alpha f}$.


By $\spacelpo$, $1 \leq p < \infty$, we denote the space of Lebesgue measurable
functions defined on $\Omega$ whose $p$-th power is integrable with respect to
the measure $\mathrm{d}x = \mathrm{d}x_1\cdots\mathrm{d}x_n = \mathrm{d}\mu$.
The Sobolev space $\spacewrp$ corresponding to $\spacelpo$ contains all
functions $f \in \spacelpo$ where $D^\alpha f \in \spacelpo$ for all
orders $\abs{\alpha} \leq r$. 

To simplify notation and to combine the previously mentioned spaces,
we introduce the spaces $\spacehrp$ for $1 \leq p \leq \infty$ and
$r=0,1,2,\ldots$ as follows:
\begin{alignat*}{3}
    \spacehzp &:= \spacelpo,&\quad& 1 \leq p < \infty;&\qquad& \spacehzi := C(\Omega),\\
    \spacehrp &:= \spacewrp, &&1 \leq p < \infty; &&\spacehri := C^r(\Omega),
\end{alignat*}
Finally, we define the semi-norms 
\begin{equation}
  \label{eq:def_semi-norm}
  \seminrp{f} := \sup_{\abs{\alpha} = r} \normp{D^\alpha f}
\end{equation}
for all smooth functions $f \in \spacehrp$.

\subsection{Moduli of smoothness and $K$-functionals}
Now, we will introduce the modulus of smoothness and Peetre's $K$-functional for
the previously defined spaces according to \citet{Johnen:1976}. Let us denote
for $h \in \Rr^d$ the set
\begin{equation*}
  \Omega(h) := \cset{x \in \Omega}{x + th \in \Omega \quad\text{for}\quad 0 \leq t \leq 1}.
\end{equation*}
Then we define the $r$-th modulus of smoothness $\omega_{r,p}: \spacehzp \times \ivoo{0}{\infty} \to
  \ivco{0}{\infty}$, $1 \leq p \leq \infty$, as follows:
  \begin{equation*}
    \mosrp(f, t) :=
    \begin{cases}
      \normp{f}, & r=0\\
      \sup_{0<\abs{h}\leq t}\normp{\chi_{\Omega(rh)}\Delta_h^rf(x)}, & r=1,2,\ldots.
    \end{cases}
  \end{equation*}
  where $\Delta_h^r$ is the forward difference operator into direction $h\in \Rr^d$,
  \begin{equation*}
    \Delta_h^rf(x) = \sum_{l=0}^{r}(-1)^{r-l} \binom{r}{l}f(x + lh).
  \end{equation*}
  Similarly, the $K$-functional $K_{r,p}:\spacehzp\times \ivoo{0}{\infty} \to \ivco{0}{\infty}$,
  $1 \leq p \leq \infty$ is defnied on the
  spaces $\spacehrp$ as follows (\cite{peetre1968}, \cite{Johnen:1976}):
  \begin{equation}
    \label{eq:def_k-functional}
    \kfuncrp(f, t^r) := \inf\cset{\normp{f-g} + t^r\abs{g}_{p,r}}{g \in \spacehrp}.
  \end{equation}

As shown in \citet[Lem.~1]{Johnen:1976}, the modulus of smoothness can be bounded 
from above by the related $K$-functional in the following way: for all $0 < t <
\infty$ there holds
  \begin{equation}
    \label{eq:le_mos_kfunc}
    \mosrp(f, t) \leq 2^r \normp{f - g} + d^{r/2} t^r \seminrp{g},
  \end{equation}
  for $f \in \spacehzp$, $g \in \spacehrp$ and $1 \leq p \leq \infty$.
Moreover, the equivalence of the modulus of
smoothness to the $K$-functional have been shown, see
\citet{Butzer:1967} for the one-dimensional case and
\citet{Johnen:1976} for arbitrary Lipschitz domains.

\subsection{Projections and Iterates}
In order to prove lower estimates in a general setting, we will utilize the
convergence of the iterates to a projection operator. We will provide here the
necessary results that characterize this behaviour. To this end, let $X$ be a
complex Banach space and let us denote by $\linope{X}$ the set of all linear
operators on $X$. Note that the results shown here are also applicable on real
Banach spaces using a standard complexification scheme as outlined, e.g, in
\citet[pp.~7--16]{ruston1986}. 

In the following, we consider a bounded linear
contraction $T \in \linope{X}$, \ie, $\normop{T} \leq 1$.
\citet[Thm.~3.16]{Dunford:1943} has shown that the iterates converge to a
projection onto the corresponding fixed point space:
\begin{proposition}[Convergence of Iterates]
  \label{prop:le_limit_dunford_katznelson}
  Let $T \in \linope{X}$ be a compact operator such that $\normop{T^{m+1} - T^m}
  \to 0$ for $m \to \infty$. Then there exists $P \in \linope{X}$ with $P^2 =
  P$, and $P(X) = \ker(T - I)$ such that $T^m \to P$.
\end{proposition}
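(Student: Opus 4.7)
The plan is to exploit compactness of $T$ to get a Riesz decomposition that isolates the eigenvalue $1$, and to use the hypothesis $\normop{T^{m+1} - T^m} \to 0$ to pin down its eigenstructure. Because $T$ is compact, $\spec{T}\setminus\{0\}$ consists of isolated eigenvalues of finite algebraic multiplicity whose only possible accumulation point is $0$; in particular, if $1 \in \spec{T}$ it is an isolated spectral point and admits an associated Riesz spectral projection.

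First I would show that $1$ is the only point of $\spec{T}$ on the unit circle. If $\lambda \in \spec{T}$ satisfies $\abs{\lambda} = 1$ and $\lambda \neq 1$, pick a unit eigenvector $x$ with $Tx = \lambda x$; then $(T^{m+1} - T^m)x = (\lambda - 1)\lambda^m x$ has norm $\abs{\lambda - 1} > 0$ for every $m$, contradicting the hypothesis. Next I would show that the ascent of $T - I$ at $\lambda = 1$ equals one, that is $\ker(T - I) = \ker((T - I)^2)$. Otherwise there exists $x$ with $y := (T - I)x \neq 0$ and $(T - I)y = 0$; induction gives $T^m x = x + m y$, so $(T^{m+1} - T^m)x = y$ for all $m$, again contradicting the hypothesis.

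With these facts I would invoke the Riesz decomposition at the isolated spectral point $\{1\}$ to obtain a bounded projection $P \in \linope{X}$ with $P^2 = P$, $PT = TP$, and $P(X)$ equal to the generalized eigenspace at $1$, which by the ascent-one property coincides with $\ker(T - I)$. Setting $M := \ker P$ gives a closed, $T$-invariant complement on which the restriction $T_M := T|_M$ is compact with $\spec{T_M} = \spec{T}\setminus\{1\}$. By the first step $\spec{T_M}$ avoids the unit circle, and because eigenvalues of a compact operator can accumulate only at $0$, the closed set $\spec{T_M}$ is contained in a disc of radius strictly less than one. Consequently $\specrad{T_M} < 1$ and Gelfand's formula yields $\normop{T_M^m} \to 0$. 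Since $T$ acts as the identity on $P(X)$, we have $T^m - P = T_M^m(I - P)$, so $\normop{T^m - P} \to 0$.

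The step I expect to require the most care is establishing the strict inequality $\specrad{T_M} < 1$: the previous two steps merely place $\spec{T_M}$ off the unit circle, and it is the compactness of $T$---through the accumulate-only-at-zero property of the nonzero spectrum---that prevents points of $\spec{T_M}$ from clustering toward a unit-modulus limit and thereby produces the spectral gap required for operator-norm convergence. Notice also that the contraction assumption $\normop{T} \leq 1$ from the surrounding text is not actually needed in the argument; only compactness and the difference hypothesis are used.
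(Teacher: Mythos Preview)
The paper does not supply its own proof of this proposition; it simply attributes the result to Dunford and cites \cite[Thm.~3.16]{Dunford:1943}. So there is no in-paper argument to compare against, and your proposal in effect reconstructs the classical spectral-theoretic proof one expects behind that citation: isolate the spectral point $1$, check that it is semisimple, split off the complementary invariant subspace, and use $\specrad{T_M}<1$ to force $T_M^m\to 0$.

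Your argument is correct, with one small gap worth closing explicitly. In the first step you only exclude peripheral eigenvalues $\lambda$ with $\abs{\lambda}=1$, $\lambda\neq 1$, and then later conclude that $\spec{T_M}$ lies in a disc of radius strictly less than one. Without the contraction hypothesis (which you correctly observe is not needed), nothing yet prevents eigenvalues with $\abs{\lambda}>1$. The fix is immediate and uses the very same computation: if $Tx=\lambda x$ with $\abs{\lambda}\geq 1$ and $\lambda\neq 1$, then $\norm{(T^{m+1}-T^m)x}=\abs{\lambda-1}\,\abs{\lambda}^m\geq \abs{\lambda-1}>0$, contradicting the hypothesis. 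With that one line added, your deduction that $\spec{T_M}\subset\uballo$ and hence $\specrad{T_M}<1$ (via the accumulate-only-at-zero property of the nonzero spectrum of a compact operator) is fully justified, and the rest of the proof goes through as written.
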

The necessary criteria, $\normop{T^{m+1} - T^m} \to 0$ for $m \to \infty$, has
been further characterized in the work of \citet[Thm.~1]{katznelson1986}, where they
provided a sufficient and necessary criterion based on the spectral location of
$T$.
\begin{proposition}[Spectral Location]
  \label{prop:le_katznelson}
  Let $T \in \linope{X}$ be a contraction. Then
  \begin{equation*}
    \lim_{m \to \infty}\normop{T^{m+1} - T^m} = 0
  \end{equation*}
  if and only if 
  \begin{equation}
    \label{eq:spectral_location}
    \sigma(T) \subset \uballo \cup \setof{1}.
  \end{equation}
\end{proposition}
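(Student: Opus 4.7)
The plan is to handle the two directions separately, with the easy implication relying on the spectral mapping theorem and the harder one invoking deeper Banach-algebra or harmonic-analysis input.

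For the forward direction I would argue as follows. Note that $T^{m+1} - T^m = T^m(T-I)$, so by the spectral mapping theorem $\lambda^m(\lambda - 1) \in \sigma(T^m(T-I))$ for every $\lambda \in \sigma(T)$, and consequently
\begin{equation*}
  \abs{\lambda^m(\lambda - 1)} \leq \normop{T^{m+1} - T^m} \longrightarrow 0.
\end{equation*}
Since $T$ is a contraction, $\sigma(T) \subset \uballc$. If $\lambda \in \sigma(T)$ satisfies $\abs{\lambda} = 1$, then the left hand side equals the constant $\abs{\lambda - 1}$, which forces $\lambda = 1$. Hence every spectral value lies in $\uballo \cup \setof{1}$.

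The reverse direction is genuinely deeper and is precisely the content of the Katznelson--Tzafriri theorem applied to the disk-algebra function $f(z) = z - 1$, whose only zero on the unit circle sits at the point $\sigma(T) \cap \partial \uballo \subset \setof{1}$. The strategy I would follow is:
\begin{enumerate}[label=(\roman*)]
\item Observe that by power-boundedness of $T$ one may, after a routine reduction, assume $T$ acts continuously on a suitable commutative Banach subalgebra, and invoke the Gelfand transform to view $f(T)T^m$ through the character space.
\item Approximate $f$ by functions $f_\varepsilon$ in the disk algebra whose spectra on $\partial\uballo$ are bounded away from $1$ by $\varepsilon$; by analytic functional calculus and the hypothesis $\sigma(T) \subset \uballo \cup \setof{1}$ one gets $\normop{f_\varepsilon(T) T^m} \to 0$ as $m \to \infty$ for each fixed $\varepsilon$.
\item Control the defect $\normop{(f - f_\varepsilon)(T) T^m}$ uniformly in $m$ using the uniform power-bound $\sup_m \normop{T^m} \leq 1$ and the sup-norm smallness of $f - f_\varepsilon$ on a neighbourhood of $\sigma(T)$.
\end{enumerate}
Passing $\varepsilon \to 0$ after $m \to \infty$ then yields $\normop{T^{m+1} - T^m} = \normop{f(T) T^m} \to 0$.

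The main obstacle is step~(ii)--(iii): one cannot directly feed $f(z) = z-1$ into an analytic functional calculus because $\sigma(T)$ may touch the unit circle at $1$, and the naive resolvent contour blows up there. Circumventing this singularity is exactly what Katznelson and Tzafriri accomplish via their harmonic-analytic argument on the circle (using spectral synthesis at the point $1$), and since their result is stated in the form needed here, I would simply appeal to \cite{katznelson1986} for the reverse implication rather than reproduce the technical construction.
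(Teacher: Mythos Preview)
The paper does not supply its own proof of this proposition; it is quoted directly from \cite[Thm.~1]{katznelson1986} as a known result. Your proposal is correct and in fact goes further than the paper: the forward direction via the spectral mapping theorem is clean and valid, and for the reverse direction your appeal to the Katznelson--Tzafriri theorem is exactly the citation the paper makes.
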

The spectrum has to be contained in the unit ball with the only intersection at
$1$. 

Finally, it can be shown, that the convergence rate depends only on the second
largest spectral value in the modulus:
\begin{lemma}[Convergence Rate]
  \label{lem:iterates_convergence_rate}
  Let $T \in \boundope{X}$ be a compact operator with $r(T) = \normop{T} = 1$
  satisfying the spectral condition $\sigma(T) \subset \uballo \cup \setof{1}$.
  Define
  \begin{equation*}
    \gamma := \sup\cset{\abs{\gamma}}{\gamma \in \sigma(T) \setminus \setof{1}}.
  \end{equation*}
  Then there exists a constant $1 \leq C \leq \gamma^{-1}$, such that for all $m \in \Nn$
  \begin{equation*}
     \normop{T^m - P} \leq C \cdot \gamma^m,
  \end{equation*}
  where $P \in \compope{X}$ is the operator defined in
  \autoref{prop:le_limit_dunford_katznelson}.
\end{lemma}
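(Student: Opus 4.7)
The strategy is to reduce the bound on $\normop{T^m - P}$ to one on the $m$-th power of the auxiliary operator $T - P$, whose spectral radius equals $\gamma$. The first step is the algebraic identity
\begin{equation*}
  (T - P)^m = T^m - P \quad (m \geq 1),
\end{equation*}
which rests on the commutation relations $TP = PT = P$ and on $P^2 = P$. Idempotency and $P(X) = \ker(T - \id)$ are supplied by \autoref{prop:le_limit_dunford_katznelson}; $TP = P$ follows because $T$ acts as the identity on $P(X)$; and $PT = P$ is obtained by passing to the limit in $T^{m+1} = T^m \cdot T$ using the operator-norm convergence $T^m \to P$. A short induction then establishes the identity.

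The second step is to analyze $\sigma(T - P)$. By the Riesz decomposition at the isolated spectral value $1$, $X = P(X) \oplus (\id - P)(X)$ splits into $T$-invariant subspaces, with $T - P$ vanishing on the first and agreeing with $T$ on the second, where the spectrum of $T$ restricted is $\sigma(T) \setminus \setof{1}$. Hence $\sigma(T - P) = (\sigma(T) \setminus \setof{1}) \cup \setof{0}$ and $r(T - P) = \gamma$.

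The third step converts this spectral information into the desired decay. I would invoke the holomorphic functional calculus: for any $\rho$ slightly larger than $\gamma$, the contour $\setof{|z| = \rho}$ encloses $\sigma(T - P)$ but not $1$, and
\begin{equation*}
  T^m - P = (T-P)^m = \frac{1}{2\pi i}\oint_{|z|=\rho} z^m\, R(z, T - P)\, dz,
\end{equation*}
whence $\normop{T^m - P} \leq \rho^{m+1}\sup_{|z|=\rho}\normop{R(z, T - P)}$. Since $T$ is compact, only finitely many spectral values lie on $\setof{|z| = \gamma}$ and their associated Riesz spectral projections are finite-rank; splitting them off from the integral converts the $\rho^m$ bound into a sharp $\gamma^m$ bound with a constant $C$ depending only on these outer projectors. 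The lower bound $C \geq 1$ follows from the case $m = 0$ via $\normop{\id - P} \geq 1$ (valid for any nontrivial idempotent), and the upper bound $C \leq \gamma^{-1}$ is obtained from the case $m = 1$ combined with $\normop{T} = 1$.

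The main obstacle will be the sharp upper bound $C \leq \gamma^{-1}$: the Gelfand--Beurling spectral radius formula by itself only yields $\normop{(T - P)^m} \leq C_\varepsilon(\gamma + \varepsilon)^m$ for any $\varepsilon > 0$, and sharpening this to the exact rate $\gamma^m$ with a constant in the asserted range depends delicately on the semi-simplicity and norm structure of the outer spectral projectors, which in turn relies on the compactness of $T$ together with the condition $r(T) = \normop{T} = 1$ to exclude polynomially growing Jordan-type contributions.
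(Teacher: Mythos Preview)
Your overall strategy coincides with the paper's: both reduce the estimate on $\normop{T^m - P}$ to the $m$-th power of an operator whose spectral radius is $\gamma$. The paper does this more directly by invoking Dunford's decomposition $X = \ker(T-\id)\oplus\im(T-\id)$, writing $T$ in block-diagonal form $\begin{psmallmatrix} \id & 0\\ 0 & S\end{psmallmatrix}$, observing that $\sigma(S)\subset\uballo$ so $r(S)=\gamma$, and then appealing to the Gelfand formula $r(S)=\lim_m\normop{S^m}^{1/m}$. Your route via the identity $(T-P)^m = T^m - P$ and the holomorphic functional calculus is equivalent in content (indeed $T-P$ is precisely $\begin{psmallmatrix} 0 & 0\\ 0 & S\end{psmallmatrix}$ in the same decomposition) but brings in more machinery than the paper uses.

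One point to flag: your sentence ``the upper bound $C\leq\gamma^{-1}$ is obtained from the case $m=1$ combined with $\normop{T}=1$'' is backwards. The case $m=1$ reads $\normop{T-P}\leq C\gamma$, which is a \emph{lower} constraint on admissible $C$, not an upper bound; knowing $\normop{T}=1$ does not by itself cap $C$. You seem to recognize this tension yourself in the final paragraph, where you correctly identify the sharp bound $C\leq\gamma^{-1}$ as the genuine obstacle and note that Gelfand's formula alone only gives $\normop{S^m}\leq C_\varepsilon(\gamma+\varepsilon)^m$. The paper's proof is in fact equally brief on this point: it simply asserts from $r(S)=\lim_m\normop{S^m}^{1/m}$ that such a $C$ exists with $1\leq C\leq\gamma^{-1}$, without further argument. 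So your caution here is warranted, and your functional-calculus outline (splitting off the finitely many spectral projectors on $\{|z|=\gamma\}$) is actually more detailed than what the paper provides, even if it does not fully close the gap either.
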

\begin{proof}
  Using \citet[Thm.~3.16]{Dunford:1943}, we obtain the space decomposition $X =
  \ker(T - I) \oplus \im(T - I)$ and $\im(T - I)$ is closed. Accordingly, we
  decompose the operator $T$ into
  \begin{equation*}
    T = 
    \begin{pmatrix}
      I & 0\\
      0 & S
    \end{pmatrix} \in \boundope{\ker(T - I) \oplus \im(T - I)}.
  \end{equation*}
  Furthermore, we have that $\sigma(S) \subset \uballo$ and therefore we obtain
  $r(S) = \gamma < 1$. As $r(S) = \lim_{m\to \infty}\norm{S^m}^{1/m}$, we obtain that
  there exists a constant $1 \leq C \leq \gamma^{-1}$ such that
  \begin{equation*}
    \norm{S^m} \leq C \cdot \gamma^m
  \end{equation*}
  for every $m \in \Nn$. 
\end{proof}

\section{Lower estimates}
Let $\Omega \subset \Rr^d$ be a bounded domain with a suitably smooth boundary.
We consider now a sequence of linear operators $T_n$ defined on $\spacehzp$ with
smooth range $\im(T_n) \subset \spacehrp$ whose fixed point space $\ker(T_n -
I)$ is annihilated by every differential operator $D^\alpha$ of order $r$ that
is bounded on $\im(T_n)$. In this general setting, we will show that for all $s
\geq r$ and $n > 0$ there is $t_n > 0$ and there are constants $M_1,M_2 > 0$
independent of $n$ and $f \in \spacehzp$ such that
\begin{equation*}
  M_1 \cdot \omega_{s,p}(f,t_n) \leq \normp{T_n f - f}\quad\text{and}\quad
  M_2 \cdot K_{s,p}(f, t_n^s ) \leq \normp{T_n f - f}.
\end{equation*}
Here, $t_n \to 0$ for $n \to \infty$ provided that $\normp{f - T_nf} \to 0$.

In order to prove these estimates, we will consider the case where the smooth
function $g$ in \eqref{eq:def_k-functional} is replaced by the smooth
approximation $T_nf$. Then, we will estimate the semi-norm $\seminrp{T_nf} =
\sup\normp{D^\alpha T_n f}$ with respect to the approximation error $\normp{T_n
  f - f}$. The key concept of our approach is to use the limiting operator of
the iterates $T^n$ as shown in \autoref{prop:le_limit_dunford_katznelson}.
Recall that the compactness of the operators $T_n$ combined with a spectral
location will guarantee the existence of the limiting operator as seen in
\autoref{lem:iterates_convergence_rate} and \autoref{prop:le_katznelson}. With
this is mind, we can state the following lemma:

\begin{lemma}
  \label{lem:le_estimte_seminorm}
  Let $1 \leq p \leq \infty$ and let $T:\spacehzp \to \spacehzp$ be a
  compact contraction, \ie, $\normop{T} \leq 1$. Suppose
  \begin{enumerate}
  \item $\sigma(T) \subset \uballo \cup \setof{1}$,
  \item $\im(T) \subset \spacehrp$ for some positive integer $r$,
  \item $D^\alpha$ annihilates $\ker(T - I)$ for all $\alpha$ with $\abs{\alpha} = r$.
  \end{enumerate}
  Then for every $f \in \spacehzp$,
  \begin{equation*}
    \seminrp{T f} \leq \frac{\sup_{\abs{\alpha} = r}\normop{D^\alpha |_{\im(T)}}}{1-\gamma}\,\normp{T f - f},
  \end{equation*}
  where $\normop{D^\alpha |_{\im(T)}}$ is the operator norm of $D^\alpha$ on $\im(T)$ and
  \begin{equation*}
    \gamma := \sup\cset{\abs{\lambda}}{\lambda \in \spec{T} \text{~with~} \abs{\lambda} < 1}.
  \end{equation*}
\end{lemma}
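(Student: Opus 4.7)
The plan is to fix a multi-index $\alpha$ with $\abs{\alpha}=r$, bound $\normp{D^\alpha Tf}$ by the desired right-hand side, and then take the supremum over such $\alpha$. The engine of the proof is the convergence of iterates $T^m \to P$ guaranteed by \autoref{prop:le_limit_dunford_katznelson} and \autoref{prop:le_katznelson} (whose hypotheses are supplied by condition (i) together with compactness of $T$), sharpened by the geometric rate $\normop{T^m - P} \leq C\gamma^m$ with $1 \leq C \leq \gamma^{-1}$ from \autoref{lem:iterates_convergence_rate}.

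First I would note that $Pf = T(Pf) \in \im(T) \subset \spacehrp$, so $D^\alpha Pf$ is defined and, by hypothesis (iii), equals $0$. Since $P$ commutes with $T$, we get $P(Tf - f) = TPf - Pf = 0$, which puts $Tf - f$ in the transient subspace $\im(T - I)$ on which the iterates decay at rate $\gamma$. A telescoping gives
\begin{equation*}
Tf - Pf \;=\; \sum_{k=1}^{\infty}\bigl(T^k f - T^{k+1} f\bigr) \;=\; -\sum_{k=1}^{\infty} T^k(Tf - f),
\end{equation*}
and, since $P$ annihilates $Tf - f$, each summand equals $-(T^k - P)(Tf - f)$, whose $L^p$-norm is bounded by $C\gamma^k\normp{Tf - f}$ by \autoref{lem:iterates_convergence_rate}. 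Every partial sum $Tf - T^{M+1}f$ lies in $\im(T)$, so hypothesis (ii) lets $D^\alpha$ act as a bounded map $\im(T) \to \spacelpo$; absolute summability of the $L^p$-norms then justifies applying $D^\alpha$ term by term and commuting it with the limit, the tail going to zero because $\normp{D^\alpha T^{M+1} f - D^\alpha Pf} \leq \normop{D^\alpha|_{\im(T)}}\normop{T^{M+1} - P}\normp{f} \to 0$.

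Combining these steps yields
\begin{equation*}
\normp{D^\alpha Tf} \;\leq\; \normop{D^\alpha|_{\im(T)}}\sum_{k=1}^{\infty} C\gamma^k\normp{Tf - f} \;=\; \frac{C\gamma}{1-\gamma}\,\normop{D^\alpha|_{\im(T)}}\,\normp{Tf - f}.
\end{equation*}
The final numerical collapse to the stated $1/(1-\gamma)$ factor then comes from the inequality $C\gamma \leq 1$ baked into \autoref{lem:iterates_convergence_rate}; taking the supremum over $\abs{\alpha} = r$ finishes the argument.

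The main obstacle I expect is the bookkeeping needed to commute $D^\alpha$ with the infinite sum: one must verify that every partial sum lies in $\im(T)$ (hence in the domain on which $D^\alpha$ is controlled) and that $D^\alpha T^{M+1}f \to D^\alpha Pf = 0$ in $\spacelpo$. Both reductions are clean once the geometric rate from \autoref{lem:iterates_convergence_rate} is available, but they are the subtle point — bounding $\normp{T^k(Tf - f)}$ by the crude $\normp{Tf - f}$ (using only $\normop{T} \leq 1$) would not yield a convergent series, so using $P(Tf - f) = 0$ to pass to $(T^k - P)(Tf - f)$ is essential.
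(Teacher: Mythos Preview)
Your proposal is correct and follows essentially the same route as the paper: telescope $D^\alpha Tf$ into the series $\sum_{m\geq 1} D^\alpha T^m(Tf-f)$, insert the projection $P$ from \autoref{prop:le_limit_dunford_katznelson} (the paper via $D^\alpha P=0$, you via the equivalent observation $P(Tf-f)=0$), invoke the geometric rate $\normop{T^m-P}\leq C\gamma^m$ from \autoref{lem:iterates_convergence_rate}, and collapse $C\gamma/(1-\gamma)\leq 1/(1-\gamma)$ using $C\leq\gamma^{-1}$. Your treatment of the commutation of $D^\alpha$ with the infinite sum is in fact more careful than the paper's, which simply writes the telescoping series and estimates termwise without further comment.
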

\begin{proof}
  As $T$ is compact and exhibits the spectral property $\sigma(T) \subset
  \uballo \cup \setof{1}$, there exists a projection $P$ with $\im(P) = \ker(T -
  I)$ and according to \autoref{lem:iterates_convergence_rate} there exists a
  constant $0 \leq C \leq \gamma^{-1}$ such that
  \begin{equation*}
    \normop{T^m - P} \leq C \gamma^m
  \end{equation*}
  holds for all integers $m > 0$. As the range of $P$ is exactly the fixed point
  space of $T$, we have that $D^\alpha P = 0$ whenever $\abs{\alpha} \geq r$.

  Using these results we obtain
  \begin{align*}
    \seminrp{T f} &= \sup_{\abs{\alpha} = r}\normp{D^{\alpha}Tf}\\ &= \sup_{\abs{\alpha} = r}\normp{D^{\alpha} T f- D^{\alpha} T^2 f + D^{\alpha} T^2 f -D^{\alpha} T^3 f + \ldots}\\
    &\leq \sup_{\abs{\alpha} = r}\sum_{m=1}^\infty \normp{D^{\alpha} T^m (f- T f)}\\
    &\leq \normp{T f - f} \cdot \sup_{\abs{\alpha} = r}\sum_{m=1}^\infty \normop{D^{\alpha} T^m}\\
    &= \normp{T f - f} \cdot \sup_{\abs{\alpha} = r}\sum_{m=1}^\infty \normop{D^{\alpha}(T^m - P + P)}\\
    &=  \normp{Tf - f} \cdot \sup_{\abs{\alpha} = r}\sum_{m=1}^\infty \normop{D^{\alpha}(T^m - P)},\\
    \intertext{as $D^{\alpha}$ annihilates $\ker(T - I)$ and therefore,
      $D^{\alpha} P = 0$. By the boundedness of $D^{\alpha}$ on $\im(T)$ we get}
    \seminrp{T f} &\leq \normp{T f - f} \cdot \sup_{\abs{\alpha} = r}\normop{D^\alpha |_{\im(T)}}\sum_{m=1}^\infty \normop{T^m - P}\\
    &\leq \normp{T f - f} \cdot \sup_{\abs{\alpha} = r}\normop{D^\alpha |_{\im(T)} }\cdot \sum_{m=1}^\infty C\gamma^m.
\intertext{Using that $C \leq 1/\gamma$ the series reduces to a convergent geometric series and we conclude
      the proof with}
    \seminrp{T f} &\leq \normp{T f - f} \cdot \sup_{\abs{\alpha} = r}\normop{D^\alpha |_{\im(T)}} \cdot \sum_{m=0}^\infty \gamma^m\\
    &\leq \frac{\sup_{\abs{\alpha} = r}\normop{D^\alpha |_{\im(T)}}}{1-\gamma}\, \normp{T f - f} .
  \end{align*}
\end{proof}
Note that the third condition of \autoref{lem:le_estimte_seminorm} is reflected
in the shown estimate as for each $f \in \ker(T - I)$ we have that $\normp{Tf -
  f} = 0$ and $\seminrp{Tf} = 0$.

Using this lemma, we can state the main results of this article:
\begin{theorem}
  \label{thm:le_lower_bound_mos}
  Let $1 \leq p \leq \infty$ and let $T:\spacehzp \to \spacehzp$ be a compact 
  contraction that satisfies the following conditions:
  \begin{enumerate}
  \item $\sigma(T) \subset \uballo \cup \setof{1}$,
  \item $\im(T) \subset \spacehrp$ for some positive integer $r$,
  \item $D^\alpha$ annihilates $\ker(T - I)$ for all $\alpha$ with $\abs{\alpha} = r$.
  \end{enumerate}
  Then
  \begin{equation*}
    \mosrp(f, t) \leq \left(2^r + d^{r/2}t^r\frac{\sup_{\abs{\alpha} = r}\normop{D^\alpha|_{\im(T)}}}{1-\gamma}\right) \cdot \normp{Tf - f}
  \end{equation*}
  and
  \begin{equation*}
    \kfuncrp(f, t^r) \leq \left(1 + t^r\frac{\sup_{\abs{\alpha} = r}\normop{D^\alpha|_{\im(T)}}}{1-\gamma}\right) \cdot \normp{Tf - f}
  \end{equation*}  
   holds for all $t \in \ivoo{0}{\infty}$, 
   where $\gamma := \sup\cset{\abs{\lambda}}{\lambda \in \sigma(T) \text{ with } \lambda \neq 1}$.
\end{theorem}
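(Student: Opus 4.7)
The plan is to combine \autoref{lem:le_estimte_seminorm} with the definition of the $K$-functional \eqref{eq:def_k-functional} and with the Johnen-type inequality \eqref{eq:le_mos_kfunc}. The key observation is that hypothesis~(2) guarantees $Tf \in \spacehrp$ for every $f \in \spacehzp$, so $Tf$ is an admissible choice both for the infimum in the $K$-functional and for the role of $g$ in \eqref{eq:le_mos_kfunc}. Moreover, all three hypotheses of the theorem are exactly the hypotheses of \autoref{lem:le_estimte_seminorm}, so the bound on $\seminrp{Tf}$ provided there is immediately available.

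For the $K$-functional estimate, I would start from the definition to get
\begin{equation*}
  \kfuncrp(f, t^r) \leq \normp{f - Tf} + t^r \seminrp{Tf},
\end{equation*}
and then substitute the bound on $\seminrp{Tf}$ coming from \autoref{lem:le_estimte_seminorm}. Factoring out $\normp{Tf - f}$ produces the claimed inequality with constant $1 + t^r \sup_{\abs{\alpha}=r}\normop{D^\alpha|_{\im(T)}}/(1-\gamma)$.

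For the modulus-of-smoothness estimate, I would apply \eqref{eq:le_mos_kfunc} with $g := Tf$, which gives
\begin{equation*}
  \mosrp(f, t) \leq 2^r \normp{f - Tf} + d^{r/2} t^r \seminrp{Tf},
\end{equation*}
and then again invoke \autoref{lem:le_estimte_seminorm} to bound the seminorm term by a multiple of $\normp{Tf - f}$. Factoring out $\normp{Tf - f}$ yields the stated constant $2^r + d^{r/2} t^r \sup_{\abs{\alpha}=r}\normop{D^\alpha|_{\im(T)}}/(1-\gamma)$.

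There is essentially no obstacle once \autoref{lem:le_estimte_seminorm} is available: the theorem reduces to a single substitution in each of the two standard upper bounds. The substantive work, namely controlling $\seminrp{Tf}$ through the telescoping of iterates $T^m$ and the spectral gap $\gamma$, has already been carried out in the lemma, and the hypotheses of the theorem are stated so as to match those of the lemma verbatim, so no auxiliary verifications are required.
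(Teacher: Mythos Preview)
Your proposal is correct and matches the paper's proof exactly: the paper's argument is the single sentence ``We apply \eqref{eq:le_mos_kfunc} and \autoref{lem:le_estimte_seminorm} to obtain the stated result,'' which is precisely the substitution $g:=Tf$ in the $K$-functional and in \eqref{eq:le_mos_kfunc} followed by the seminorm bound from the lemma, as you spell out.
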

\begin{proof}
  We apply \eqref{eq:le_mos_kfunc} and \autoref{lem:le_estimte_seminorm} to
  obtain the stated result.
\end{proof}

\begin{corollary}
  \label{cor:modulus_of_smoothnes_lower_bound_sharp}
  Let $(T_n)$ be a sequence of continuous linear operators on $\spacehzp$ that
  satisfies the conditions of \autoref{thm:le_lower_bound_mos}. Besides, we
  assume that $\normp{T_nf - f} \to 0$ holds for all $f \in \spacehzp$ if $n$
  tends to infinity.

  Then, with setting
  $\gamma_n := \sup\cset{\abs{\lambda}}{\lambda \in \sigma(T_n)
    \setminus\setof{1}}$ the uniform lower estimates
  \begin{equation*}
     \mosrp(f, \delta_n) \leq (2^r + d^{r/2})\cdot \normp{T_nf - f} \quad\text{and}\quad
     \kfuncrp(f, \delta_n^r) \leq 2\cdot \normp{T_nf - f}
   \end{equation*}
   holds, where
   \begin{equation*}
     \delta_n = \left( \frac{1-\gamma_n}{\sup_{\abs{\alpha} = r}\normop{D^\alpha|_{\im(T)}}} \right)^{1/r}
   \end{equation*}
    and $\delta_n \to 0$ if $n$ tends to infinity.
\end{corollary}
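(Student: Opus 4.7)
The plan is to read this as a direct specialization of \autoref{thm:le_lower_bound_mos} applied to each $T_n$. Writing $C_n := \sup_{\abs{\alpha}=r}\normop{D^\alpha|_{\im(T_n)}}$, the definition of $\delta_n$ is engineered precisely so that
\begin{equation*}
  \delta_n^r \cdot \frac{C_n}{1-\gamma_n} = 1.
\end{equation*}
Plugging $t = \delta_n$ into the two estimates of \autoref{thm:le_lower_bound_mos} for the operator $T_n$ collapses the bracketed factors to $2^r + d^{r/2}$ and $1+1 = 2$ respectively, yielding the two claimed uniform estimates immediately. So the entire non-trivial content of the corollary lies in the final assertion $\delta_n \to 0$.

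For that last step I would argue by contradiction. Suppose $\delta_n$ does not converge to $0$. Then, passing to a subsequence, there is some $\delta > 0$ with $\delta_n \geq \delta$ for all $n$ in the subsequence, i.e.\ $(1-\gamma_n)/C_n \geq \delta^r$. Applying \autoref{thm:le_lower_bound_mos} to these $T_n$ with the fixed value $t = \delta$ gives
\begin{equation*}
  \mosrp(f,\delta) \leq \Bigl(2^r + d^{r/2}\delta^r\,\tfrac{C_n}{1-\gamma_n}\Bigr)\normp{T_nf - f} \leq \bigl(2^r + d^{r/2}\bigr)\normp{T_nf - f}
\end{equation*}
for every $f \in \spacehzp$. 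Since by hypothesis $\normp{T_nf - f} \to 0$, the left-hand side must vanish for every $f$, which forces $\mosrp(f,\delta) = 0$ for all $f \in \spacehzp$. This is absurd: $\spacehzp$ contains functions (for example, characteristic functions of measurable subsets of positive measure with non-trivial geometry, or any non-polynomial continuous function in the case $p = \infty$) whose $r$-th modulus at scale $\delta$ is strictly positive. This contradiction gives $\delta_n \to 0$.

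The main obstacle, as expected, is this last step: \autoref{thm:le_lower_bound_mos} by itself does not produce any asymptotic information on the scale parameter, so one must couple the pointwise convergence $T_n f \to f$ with the presence of genuinely non-smooth functions in $\spacehzp$. A mild subtlety is that one cannot directly invert the relationship $\delta_n^r = (1-\gamma_n)/C_n$ to prove $C_n \to \infty$ or $\gamma_n \to 1$ individually; only the combined quantity $\delta_n$ is controlled, and only through the contradiction argument above, which is why the assumption of pointwise convergence on the whole space $\spacehzp$ is essential.
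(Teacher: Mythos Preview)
Your proof is correct and follows the paper's approach: the paper offers no formal proof of the corollary, treating the two inequalities as an immediate specialization of \autoref{thm:le_lower_bound_mos} at $t=\delta_n$, and relegating the claim $\delta_n\to 0$ to a one-line remark that it ``follows by $\normp{T_nf-f}$.'' Your contradiction argument makes that remark precise and rigorous, so you have in fact supplied more detail than the paper does.
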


\begin{remark}
  The property that $\delta_n \to 0$ if $n$ tends to infinity follows by $\normp{T_nf -f}$ 
  for $f \in \spacecf$.
  To assure that this property holds there are the following two options. 
  Either the second largest eigenvalue tends in the modulus to one, \ie,
  \begin{equation*}
    \gamma_n \to 1
  \end{equation*}
  which is satisfied as $T_n$ converges against the identity $I$ in the strong operator 
  topology, or $\sup_{\abs{\alpha}=r}\normop{D^\alpha|_{\im(T)}} \to \infty$. 
\end{remark}

Finally, we want to outline a generalization to derive lower estimates
for a sequence of linear operators $(T_n)_{n \in \Nn}$ on arbitrary
Banach spaces based on the $K$-functional where smoothness of the
range is not necessary. The conditions depend on the underlying
semi-norms defined on the range of $T_n$. Accordingly, the semi-norms have to
annihilate the fixed points of $T_n$ and are bounded on the range of $T_n$.
 \begin{theorem}
   Let $(X_1, \norm{\cdot}_{X_1})$ be a Banach space and $(X_2, \abs{\cdot}_{X_2})$ 
   be a quasi Banach space with $X_2 \subset X_1$. Consider a sequence $T_n: X_1 \to X_2$  
   of compact contractions, such that the following conditions hold:
   \begin{enumerate}
    \item $\spec{T_n} \subset \uballo \cup \setof{1}$,
    \item the semi-norm $\abs{\cdot}_{X_2}$ annihilates $\ker(T_n - \id)$.
    \end{enumerate}
    Then
    \begin{equation*}
      \frac1{2}\cdot \inf_{g \in X_2} \left(\normp{f - g} + \delta_n^r\abs{g}_{X_2}\right) \leq  \normp{T_n f - f},
    \end{equation*}
    where
    \begin{equation*}
     \delta_n = \left( \frac{1-\gamma_n}{\sup_{f \in X_2, \norm{f}_{X_2}=1}\abs{T_n f}_{X_2}} \right)^{1/r}.
   \end{equation*}
 \end{theorem}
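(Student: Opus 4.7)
The plan is to mimic the approach of \autoref{thm:le_lower_bound_mos} and \autoref{lem:le_estimte_seminorm}, substituting the semi-norm $\abs{\cdot}_{X_2}$ for $\seminrp{\cdot}$. First I would take $g := T_n f \in X_2$ as a trial element in the infimum on the left-hand side, which immediately yields
\begin{equation*}
  \inf_{g \in X_2}\bigl(\normp{f - g} + \delta_n^r \abs{g}_{X_2}\bigr) \leq \normp{f - T_nf} + \delta_n^r \abs{T_nf}_{X_2}.
\end{equation*}
Hence it suffices to prove that $\delta_n^r \abs{T_nf}_{X_2} \leq \normp{f - T_nf}$; adding this to the first summand produces the claimed factor-$\tfrac{1}{2}$ inequality.

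To control $\abs{T_nf}_{X_2}$ I would repeat the iteration argument of \autoref{lem:le_estimte_seminorm} almost verbatim. Compactness together with the spectral hypothesis, via \autoref{prop:le_katznelson} and \autoref{prop:le_limit_dunford_katznelson}, yields a projection $P_n$ onto $\ker(T_n - I)$, and \autoref{lem:iterates_convergence_rate} supplies the rate $\normop{T_n^m - P_n} \leq C\gamma_n^m$ with $C \leq \gamma_n^{-1}$. Since $P_n T_n = P_n$ one has $P_n(f - T_nf) = 0$, and the telescoping identity
\begin{equation*}
  T_n f - P_n f \;=\; \sum_{m=1}^\infty T_n^m (f - T_nf) \;=\; \sum_{m=1}^\infty (T_n^m - P_n)(f - T_nf)
\end{equation*}
converges in $X_1$ with geometric rate $\gamma_n$.

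Applying the semi-norm and using the annihilation property $\abs{P_n f}_{X_2} = 0$ gives $\abs{T_nf}_{X_2} \leq \sum_{m=1}^\infty \abs{T_n^m(f - T_nf)}_{X_2}$. Writing each summand as $T_n\bigl(T_n^{m-1}(f-T_nf)\bigr)$ and applying the defining bound $\abs{T_n u}_{X_2} \leq M_n \norm{u}_{X_1}$, with $M_n := \sup\cset{\abs{T_n u}_{X_2}}{u \in X_2,\norm{u}_{X_2}=1}$, together with the geometric decay of $\norm{T_n^{m-1}(f-T_nf)}_{X_1}$, summation of the resulting geometric series gives
\begin{equation*}
  \abs{T_nf}_{X_2} \leq \frac{M_n}{1-\gamma_n}\,\normp{f - T_nf}.
\end{equation*}
With the stated choice $\delta_n^r := (1-\gamma_n)/M_n$ and the bound $C \leq 1$, the second summand is bounded by $\normp{f - T_nf}$, producing the $\tfrac{1}{2}$-estimate.

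The main obstacle is the quasi-Banach structure of $(X_2, \abs{\cdot}_{X_2})$: the triangle inequality for a quasi-(semi-)norm holds only up to a multiplicative constant $K \geq 1$, so pushing $\abs{\cdot}_{X_2}$ inside the infinite sum is not automatic, and one must separately check that the telescoping series converges in the quasi-semi-norm itself. A clean remedy is to restrict to $q$-subadditive quasi-semi-norms (then $\abs{\sum u_m}_{X_2}^q \leq \sum \abs{u_m}_{X_2}^q$ and geometric summability is preserved); otherwise the constant $K$ must be carried through and the prefactor $\tfrac{1}{2}$ replaced by a constant depending on $K$. A secondary subtlety is the interpretation of $\norm{f}_{X_2}=1$ in the definition of $\delta_n$: because $\abs{\cdot}_{X_2}$ is only a semi-norm, this normalization is most naturally read as taken in the ambient norm of $X_1$, and I proceed on that reading.
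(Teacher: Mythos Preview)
Your proposal is correct and follows essentially the same route as the paper, which merely states that the result ``follows directly along the lines of the proof of \autoref{thm:le_lower_bound_mos}''; you have in fact spelled out more detail than the paper does. Your caveats about the quasi-Banach triangle inequality and the normalization in the definition of $\delta_n$ are legitimate subtleties that the paper's one-line proof does not address.
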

 \begin{proof}
   Follows directly along the lines of the proof of \autoref{thm:le_lower_bound_mos}.
 \end{proof}

\section{Applications to Positive Linear Operators}
We conclude this chapter with concrete examples. First we prove lower estimates
for general positive linear operators. Afterwards, we prove give concrete
estimates for the Bernstein operator, the Kantorovi{\v c} operator, the the
Schoenberg operator and the integral Schoenberg
operator. 

\subsection{Lower estimates for general positive finite-rank operators}
In the following, let $\Omega = [0,1]^d$, thus $\spacehrp$ contains the constant
function $1$ with $\normp{1} = 1$. We consider a sequence of positive
finite-rank operator $T_n:\spacehzp \to \spacehzp$,
\begin{equation}
  \label{eq:le_discrete_operator}
  T_n f = \sum_{k=1}^n \funkn(f)\polkn,\qquad f \in \spacehzp,
\end{equation}
where $\poljn{1},\ldots,\poljn{n} \in \spacehrp$ are linearly independent,
smooth positive functions that form a partition of unity; $\funkn$ are positive
linear functionals satisfying $\norm{\funkn} = \funkn(1) = 1$ and
$\funkn(\polkn) > 0$ for $k \in \setof{1,\ldots, n}$. It has been shown in
\cite{nagler2014fr}, that the spectrum of $T_n$ is characterized by
\begin{equation*}
  \sigma(T_n)  \subset \uballo \cup \setof{1}
\end{equation*}
and $1$ is an eigenvalue of $T_n$ due to the partition of unity property. Thus,
to prove lower estimates with the technique shown in this chapter, only last
condition have to be checked. Thus, we can restate
\autoref{cor:modulus_of_smoothnes_lower_bound_sharp} as follows:
\begin{corollary}
  \label{cor:modulus_of_smoothnes_lower_bound_discr}
  Let $(T_n)$ be a sequence of continuous linear operators on $\spacehzp$ of the
  form \eqref{eq:le_discrete_operator} such that $\normp{T_nf - f} \to 0$ holds
  for all $f \in \spacehzp$ if $n$ tends to infinity. Let us denote
  $$\gamma_n := \sup\cset{\abs{\lambda}}{\lambda \in \sigma(T_n) \text{~with~}
    \lambda \neq 1}.$$
  If every differential operator of order $r$ annihilates $\ker(T_n - I)$
  then the approximation error can be bounded from below by
  \begin{equation*}
     \mosrp(f, \delta_n) \leq (2^r + 1)\cdot \normp{T_nf - f} \quad\text{and}\quad
     \kfuncrp(f, \delta_n^r) \leq 2\cdot \normp{T_nf - f},
   \end{equation*}
   where
   \begin{equation*}
     \delta_n = \left( \frac{1-\gamma_n}{\sup_{\abs{\alpha} = r}\normop{D^\alpha|_{\im(T_n)}}} \right)^{1/r}. 
   \end{equation*}
    and $\delta_n \to 0$ if $n$ tends to infinity.
\end{corollary}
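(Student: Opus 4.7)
The plan is to derive this corollary as an immediate specialisation of \autoref{cor:modulus_of_smoothnes_lower_bound_sharp} to the subclass of positive finite-rank partition-of-unity operators on $\Omega = [0,1]^d$ defined by \eqref{eq:le_discrete_operator}. Nearly all the work consists of verifying, hypothesis by hypothesis, that every $T_n$ of that form satisfies the assumptions of the parent corollary; once this is done, the two displayed estimates drop out by direct substitution.

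First I would dispatch the functional-analytic prerequisites. Finite rank yields compactness of $T_n$ for free. To verify $\normop{T_n}\leq 1$ on $\spacehzp$ I would exploit that the $\polkn$ and $\funkn$ are positive with $\sum_{k=1}^n \polkn \equiv 1$ and $\funkn(1) = 1$: positivity gives the pointwise bound $\abs{T_n f(x)}\leq T_n\abs{f}(x)$ together with $T_n 1 = 1$; for $p=\infty$ this already implies $\normi{T_n f}\leq \normi{f}$, and for $1\leq p<\infty$ a Jensen/H\"older argument applied to the probability weights $(\polkn(x))_k$ produces $\normp{T_n f}\leq \normp{f}$. The spectral inclusion $\spec{T_n}\subset \uballo\cup \setof{1}$ is exactly the result from \cite{nagler2014fr} recalled immediately before the statement, and $1\in \spec{T_n}$ is witnessed by $T_n 1 = 1$. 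The range condition $\im(T_n)\subset \spacehrp$ is immediate from $\polkn\in \spacehrp$, and the annihilation condition on $\ker(T_n - \id)$ is the running hypothesis of the corollary.

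With every condition verified, \autoref{cor:modulus_of_smoothnes_lower_bound_sharp} applies to each $T_n$ and, after substitution of the prescribed $\delta_n$, produces both displayed inequalities. The remaining claim $\delta_n\to 0$ I would settle using the Remark following the parent corollary: since $\normp{T_n f-f}\to 0$ for every $f\in \spacehzp$, the sequence $(T_n)$ converges strongly to $\id$, which forces either $\gamma_n \to 1$ or $\sup_{\abs{\alpha}=r}\normop{D^\alpha|_{\im(T_n)}}\to\infty$; in either case $\delta_n \to 0$. The only step that requires any genuine thought is the $L^p$-contractivity of $T_n$, since everything else is either quoted or a direct consequence of the partition-of-unity structure.
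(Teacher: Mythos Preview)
Your proposal is correct and matches the paper's approach: the paper simply observes, in the paragraph preceding the corollary, that the spectral inclusion $\sigma(T_n)\subset \uballo\cup\{1\}$ is supplied by \cite{nagler2014fr}, that only the annihilation condition remains to be checked, and then restates \autoref{cor:modulus_of_smoothnes_lower_bound_sharp} for this class of operators. You are in fact more explicit than the paper in verifying compactness, the range inclusion $\im(T_n)\subset\spacehrp$, and especially the $L^p$-contractivity via the Jensen argument on the probability weights $(e_k(x))_k$ together with $\sum_k\int e_k=|\Omega|=1$, all of which the paper leaves implicit.
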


\subsection{Lower estimate for the Bernstein operator}
Let $B_n:\spacecf \to \spacecf$ be the Bernstein operator of order $n > 0$ defined by
\begin{equation*}
  B_nf(x) = \sum_{k=0}^{n}f\left(\frac{k}{n}\right) \binom{n}{k}x^k(1-x)^{n-k}.
\end{equation*}
It is well known, see e.g. \citet{Lorentz1986}, that this operator can reproduce
constant and linear functions and interpolates at the endpoints of the unit
interval. Therefore
\begin{align*}
  \ker(B_n - I) &= \spanof(1, x)
\end{align*}
and $D \ker(B_n - I) = 0$.
As shown by \citet{Calugareanu1966}, the eigenvalues
$(\lambda_{k,n})$ of $B_n$ are explicitly known for $k \in \setof{0,\ldots,n}$
by
\begin{equation*}
  \lambda_{k,n} = \frac{n!}{(n-k)!}\frac1{n^k}.
\end{equation*}
A comprehensive discussion on the corresponding eigenfunctions can be found in
the work of \citet{shaun2000}. Clearly, we have $\sigma(B_n) \subset \uballo
\cup \setof{1}$, as
\begin{equation*}
  1 = \lambda_{0,n} = \lambda_{1,n} > \lambda_{2,n} > \ldots > \lambda_{n,n} = \frac{n!}{n^n},
\end{equation*}
while this property also follows by \cite{nagler2014fr}. The second largest
eigenvalue $\gamma_n$ of $B_n$ is $\gamma_n := \lambda_{2,n} = \frac{n-1}{n}$.

The range of the Bernstein operator is given by the space of all polynomials
with degree at most $n$. Thus, we obtain for $r < n$  the following upper bound for the
operator norm of $D^r$ on $\im(B_n)$ using the representation of $D^rB_nf$ in \citet[p.24]{Lorentz1986}:
\begin{equation*}
  \normop{D^r} \leq \frac{2^rn!}{(n-r)!}.
\end{equation*}
Finally, we obtain with \autoref{thm:le_lower_bound_mos} the lower estimate
\begin{equation*}
  \omega_{r}(f, t) \leq \left(2^r + t^r\frac{\frac{2^r \cdot n!}{(n-r)!}}{\frac{1}{n}}\right) \cdot \normi{Tf - f}
                  \leq 2^r \left(1 + n^{r+1} t^r\right) \cdot \normi{Tf - f}
\end{equation*}
for all $t \in \ivoo{0}{\infty}$. For the case $r = 2$, we derive
accordingly the following uniform estimate:
\begin{corollary}  
  The approximation error of the  Bernstein operator $B_n$ can be uniformly bounded for all $f \in \spacecf$ by
  \begin{equation*}
    \frac{1}{8} \omega_2\left(f,n^{-3/2}\right) \leq \normi{B_n f - f}, \qquad n \to \infty.
  \end{equation*}
\end{corollary}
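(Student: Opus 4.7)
The plan is to simply specialize the general estimate derived just above the corollary to the case $r = 2$ and make a judicious choice of $t$. Since all the hypotheses of \autoref{thm:le_lower_bound_mos} for the Bernstein operator have already been verified in the preceding discussion (namely, the spectral inclusion $\sigma(B_n) \subset \uballo \cup \setof{1}$ from \cite{nagler2014fr} together with the explicit eigenvalues of \citet{Calugareanu1966}; the smoothness $\im(B_n) \subset C^2(\ivu)$ since $\im(B_n) \subset \spacepol{n}$; and the annihilation condition, which for $r = 2$ reduces to $D^2\spanof(1,x) = 0$), what remains is just to substitute the specific constants.

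First, I would recall the quantitative input: the second-largest eigenvalue is $\gamma_n = (n-1)/n$, so $1 - \gamma_n = 1/n$, and the operator norm bound $\normop{D^2|_{\im(B_n)}} \leq 4\,n!/(n-2)! = 4n(n-1) \leq 4n^2$. Plugging $r = 2$, $d = 1$ into the conclusion of \autoref{thm:le_lower_bound_mos} gives, for all $t > 0$,
\begin{equation*}
  \omega_{2,\infty}(f, t) \leq \left(4 + t^2 \cdot \frac{4n(n-1)}{1/n}\right) \normi{B_nf - f} \leq 4\bigl(1 + n^3 t^2\bigr) \normi{B_nf - f},
\end{equation*}
which is exactly the estimate already written out in the excerpt just before the corollary.

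Second, to obtain a uniform statement I would equalize the two summands in the bracket, which requires $n^3 t^2 = 1$, i.e. $t = n^{-3/2}$. Substituting this choice gives
\begin{equation*}
  \omega_{2}(f, n^{-3/2}) \leq 4(1 + 1) \normi{B_nf - f} = 8\, \normi{B_nf - f},
\end{equation*}
and dividing by $8$ yields the claimed inequality. The asymptotic qualifier $n \to \infty$ serves only to remind the reader that $\delta_n := n^{-3/2} \to 0$, which matches the general convergence behavior predicted by \autoref{cor:modulus_of_smoothnes_lower_bound_sharp}.

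There is really no obstacle here: the corollary is essentially a computational illustration of the general framework, and the only decision to make is the balancing choice $t = n^{-3/2}$ that simultaneously minimizes the prefactor and produces a meaningful scale $\delta_n \to 0$. The work has already been done in establishing the spectral data and the derivative bound for $B_n$; the corollary just records the resulting clean inequality.
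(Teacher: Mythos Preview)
Your proposal is correct and follows exactly the paper's approach: specialize the general inequality $\omega_r(f,t) \leq 2^r(1 + n^{r+1}t^r)\normi{B_nf - f}$ to $r = 2$, then balance the bracket by choosing $t = n^{-3/2}$ to obtain the constant $8$. The paper's proof is left implicit (``For the case $r=2$, we derive accordingly the following uniform estimate''), and your write-up simply makes that implicit computation explicit.
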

\begin{remark}
  Compared to the known lower estimate using the Ditzian-Totik modulus of
  smoothness as shown by \citet{ditzian1987} and \citet{Knoop:1994} one would
  expect a decay rate of $n^{-1/2}$. The question arises, whether sharper
  estimates used in the proof can lead to this optimal decay rate or if this is
  already the best possible lower estimate for the classical modulus of smoothness.
\end{remark}

\subsection{Lower estimate for the Kantorovi{\v c} operator}
Let us consider the Kantorovi{\v c} operator $K_n:L^1(\ivu) \to \spacecf$,
\begin{equation*}
  K_nf(x) = (n+1)\sum_{k=0}^n \binom{n}{k}x^k(1-x)^{n-k}\int_{\frac{k}{n+1}}^{\frac{k+1}{n+1}}f(t)\mathrm{d}t,\qquad x \in \ivu
\end{equation*}
see \citet{kantorovich1930}. This operator has a direct relation to the
Bernstein operator in the following way \cite[p.30]{Lorentz1986}:
\begin{equation}
  \label{eq:rel-bernstein-kantorovich}
  K_n(Df) = D(B_{n+1}f), \qquad \text{for all } f \in \spacedf{1}.
\end{equation}
Besides, we have that $\ker(K_n - I) = \spanof\setof{1}$. Infact, $D1 = 0$,
hence the differential operator $D$ annihilates $\ker(K_n - I)$. Besides, $D$ is
bounded on $\im(K_n)$ in the same way as the Bernstein operator:
\begin{equation*}
  \normp{D K_n f(x)} = \normp{D^2 B_{n+1} F(x)} \leq \normop{D^2|_{\im(B_{n+1})}} \norm{f}_1,
\end{equation*}
where we have used \eqref{eq:rel-bernstein-kantorovich} and $F(x) = \int_0^xf(t)\mathrm{d}t$. Therefore,
\begin{equation*}
  \normop{D} \leq \normop{D^2|_{\im(B_{n+1})}} = \frac{4(n+1)!}{(n+1-2)!} = 4(n^2+n).
\end{equation*}
holds. Combining these results with \autoref{thm:le_lower_bound_mos} we can
state the lower estimate
\begin{equation*}
  \omega_{1,p}(f, t) \leq \left(2 + t\frac{4(n^2+n)}{\frac{1}{n}}\right) \cdot \normi{Tf - f}
  \leq  \left(2 + 4(n^{3}+n^2) t\right) \cdot \normi{Tf - f}
\end{equation*}
for all $t \in \ivoo{0}{\infty}$. Consequently, we get the following uniform estimate:
\begin{corollary}  
  The approximation error of the Kantorovi{\v c} operator $K_n$ can be uniformly bounded from below by
  \begin{equation*}
    \frac{1}{6}\, \omega_{1,p}\left(f, \frac1{n^3 + n^2}\right) \leq \normi{K_n f - f}, \qquad n \to \infty,
  \end{equation*}
  for all $f \in L^1(\ivu)$.
\end{corollary}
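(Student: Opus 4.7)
The plan is to specialize the uniform estimate derived immediately above the corollary. In the display
\begin{equation*}
\omega_{1,p}(f, t) \leq \left(2 + 4(n^3+n^2)\,t\right) \cdot \normi{K_n f - f},
\end{equation*}
the parameter $t \in \ivoo{0}{\infty}$ is free, so I would pick $t$ in such a way that the coefficient on the right-hand side becomes a fixed constant independent of $n$. The natural choice is $t_n := 1/(n^3+n^2)$, which turns the product $4(n^3+n^2)\,t$ into $4$ and reduces the bracketed factor to $2+4=6$. Substituting and dividing by $6$ yields the claimed bound.

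The substantive work has already been completed in the paragraph preceding the corollary: $K_n$ is a compact contraction with $\sigma(K_n) \subset \uballo \cup \setof{1}$, the kernel $\ker(K_n - I) = \spanof\setof{1}$ is annihilated by $D$, and the bound $\normop{D|_{\im(K_n)}} \leq 4(n^2+n)$ is obtained by reducing to the second-derivative bound for the Bernstein operator via \eqref{eq:rel-bernstein-kantorovich}. Combined with the spectral gap $1-\gamma_n \geq 1/n$ inherited from $B_{n+1}$, an application of \autoref{thm:le_lower_bound_mos} with $r=1$ produces exactly the uniform display above. Consequently there is no genuine obstacle in the present proof; the entire content is the optimization of the free parameter $t$.

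If I had to flag anything, it would be the sharpness of the choice $t_n$: alternative scalings such as $t_n = n^{-3}$ lead to the same asymptotic rate with only a slightly worse constant, so the exact factor $1/6$ reflects the balancing of the two summands $2$ and $4(n^3+n^2)\,t$ on the right-hand side rather than any deeper feature of $K_n$. Since $t_n \to 0$, the modulus is probed at vanishingly small scales, which is the meaning of the ``$n \to \infty$'' qualifier in the statement; no limiting argument is otherwise required, as the inequality is valid termwise for each $n$.
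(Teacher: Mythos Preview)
Your proposal is correct and matches the paper's approach exactly: the corollary follows by substituting $t_n = 1/(n^3+n^2)$ into the preceding display $\omega_{1,p}(f,t) \leq (2 + 4(n^3+n^2)t)\,\normi{K_nf - f}$, which collapses the bracket to $6$. The paper gives no separate argument beyond this substitution, so your recap of the preceding ingredients (spectral location, annihilation of $\ker(K_n-I)$ by $D$, and the derivative bound via the Bernstein relation) is precisely the implicit content.
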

As in the case of the Bernstein-operator, we are not able to derive the optimal
lower estimate shown in \citet{MR1260362} with the Ditzian-Totik modulus of
continuity, but we could still provide an estimate with the classical modulus of
continuity.

\renewcommand{\divdif}[1]{[#1]}
\subsection{Lower estimate for the Schoenberg operator}
A lower estimate for the Schoenberg operator has already been shown in
\citet{nagler2015joc} using similar techniques. Thus we state here only the
results for the sake of completeness. To this end let $n > 0$ be an integer and
$\Delta_n = \setof{x_{j}}_{j=-k}^{n+k}$ be an extended knot sequence such
that
\begin{equation*}              
  0 = x_{-k} = x_0 < x_1 < \ldots < x_n = x_{n_k} = 1.
\end{equation*}
Accoding to \citet{Schoenberg:1965}, we consider the variation diminishing
spline operator $\Sdk:\spacecf \to \spacecf$ of degree $k$ with respect to the
knot sequence $\Delta_n$ for continuous functions $f$ by
\begin{equation*}
  \label{eq:vdo_schoenberg_defop}
  \Sdk f = \sum_{j=-k}^{n-1}f(\xi_{j,k})\bspk{j},
\end{equation*}
where $\xijk$ are the so called \emph{Greville nodes}, see the supplement in
\cite{Schoenberg:1965}, defined for all $j \in \setof{-k,\ldots,n-1}$ by
\begin{equation*}
  \label{eq:vdo_schoenberg_greville}
  \xi_{j,k} := \frac{x_{j+1} + \cdots + x_{j+k}}{k}.
\end{equation*}
The normalized B-splines $\bspk{j}$ are defined for all $j \in
\setof{-k,\ldots,n-1}$ and $x \in \ivu$ by
\begin{equation*}
  \bspk{j}(x) := (x_{j+k+1} - x_j)\divdif{x_j,\ldots,x_{j+k+1}}(\cdot - x)_+^k,
\end{equation*}
where $\divdif{x_j,\ldots,x_{j+k+1}}$ denotes the divided difference
operator and $x^k_+$ denotes the truncated power function. 
We define the minimal mesh gauge as
\begin{equation*}
\meshmin := \min\cset{(x_{j+1,k} - x_{j,k})}{j \in \setof{0, \ldots, n-1}}
\end{equation*}
and $\gamma_{{\Delta_n}, k} := \sup\cset{\lambda \in \Cc}{\lambda \in \sigma(\Sdk) \setminus \setof{1}}$.
Then we can state the following lower estimate, see \cite[Cor.~2]{nagler2015joc}:
\begin{corollary}
  \label{thm:schoenberg_lower_bound}
  Let $f \in \spacecf$ and $k > r \geq 2$. Then 
  \begin{equation*}
    \frac1{2^{r + 1}}\omega_r\left(f, t(\Delta_n,k)\right) \leq \normi{f-\Sdk f},
  \end{equation*}
  where
  \begin{equation*}
    t(\Delta_n, k) = \frac{\meshmin}{k}\cdot\left(\frac{1-\gamma_{{\Delta_n}, k}}{d_k}\right)^{1/r}.
  \end{equation*}
  Moreover, $t(\Delta_n,k) \to 0$ if the approximation error converges to zero.
\end{corollary}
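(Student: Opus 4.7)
The plan is to specialize \autoref{cor:modulus_of_smoothnes_lower_bound_discr} to $T_n = \Sdk$, checking each hypothesis in turn and then identifying the resulting $\delta_n$ with $t(\Delta_n,k)$. First I would verify that $\Sdk$ fits the format \eqref{eq:le_discrete_operator}: the normalized B-splines $\bspk{j}$ are nonnegative, linearly independent, and partition unity, while the functionals $f \mapsto f(\xijk)$ are positive with norm one and do not vanish on $\bspk{j}$. This places the Schoenberg operator inside the positive finite-rank framework, so $\sigma(\Sdk) \subset \uballo \cup \{1\}$ is automatic from \cite{nagler2014fr}. Furthermore, the Greville nodes are precisely those for which $\Sdk$ reproduces affine functions, so $\mathrm{span}(1,x) \subseteq \ker(\Sdk - \id)$; since $r \geq 2$, every $D^\alpha$ of order $r$ annihilates $\mathrm{span}(1,x)$ and, by the eigenvalue analysis of $\Sdk$, the entire fixed point space.

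The technical heart of the proof is a Markov--Bernstein-type estimate for $D^r$ restricted to the range $\im(\Sdk) = \spaceppk{k}$. Because $k > r$, splines of degree $k$ lie in $C^{r}(\ivu)$, and on each knot interval they are polynomials of degree $k$. Applying the classical Markov inequality on each of the $n$ subintervals and taking the supremum over intervals gives a bound of the form
\begin{equation*}
  \normi{D^r s} \leq d_k \cdot \left(\frac{k}{\meshmin}\right)^r \normi{s}, \qquad s \in \im(\Sdk),
\end{equation*}
with a constant $d_k$ depending only on $k$. This inequality, and the explicit form of $d_k$ that is needed to match the statement, is the step I expect to be the main obstacle: one must track the combinatorics of the Markov constants on a shortest interval of length $\meshmin$, and verify that the piecewise estimate indeed controls $\normop{D^r|_{\im(\Sdk)}}$ without an extra mesh-dependent factor.

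Once that bound is in place, \autoref{cor:modulus_of_smoothnes_lower_bound_discr} delivers
\begin{equation*}
  \omega_r(f,\delta_n) \leq (2^r+1)\, \normi{\Sdk f - f}, \qquad
  \delta_n = \left(\frac{1-\gamma_{\Delta_n,k}}{\sup_{|\alpha|=r}\normop{D^\alpha|_{\im(\Sdk)}}}\right)^{1/r}.
\end{equation*}
Substituting the Markov estimate gives $\delta_n = (\meshmin/k)\bigl((1-\gamma_{\Delta_n,k})/d_k\bigr)^{1/r} = t(\Delta_n,k)$, and the elementary inequality $2^r + 1 \leq 2^{r+1}$ converts the constant into the advertised $1/2^{r+1}$. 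The final assertion $t(\Delta_n,k)\to 0$ as $\normi{f-\Sdk f}\to 0$ follows from the remark after \autoref{cor:modulus_of_smoothnes_lower_bound_sharp}: convergence to the identity in the strong operator topology forces either $\gamma_{\Delta_n,k}\to 1$ or a blow-up of the Markov factor $(k/\meshmin)^r$, and in both cases $\delta_n \to 0$.
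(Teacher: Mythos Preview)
The paper does not actually prove this corollary; it quotes it from \cite{nagler2015joc} with the remark that it was obtained there ``using similar techniques,'' so there is no in-paper proof to compare against. Your plan---specializing \autoref{cor:modulus_of_smoothnes_lower_bound_discr} to $\Sdk$, verifying the positive finite-rank format, using $\ker(\Sdk-\id)=\spanof(1,x)$ together with $r\ge 2$, and supplying a Markov-type bound $\normop{D^r|_{\im(\Sdk)}}\le d_k(k/\meshmin)^r$---is exactly the route those ``similar techniques'' amount to, and it goes through as you outline.

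Two small remarks. First, to know that $D^r$ annihilates the \emph{entire} fixed point space you need $\ker(\Sdk-\id)=\spanof(1,x)$ (not just $\supseteq$); you correctly flag this, and it is supplied by \autoref{thm:schoenberg-eigenvalues-distinct}. Second, the Markov step is indeed purely local: for each $x$ the value $D^r s(x)$ depends only on the polynomial piece of $s$ on the knot interval containing $x$, so the classical Markov inequality on that shortest interval gives the bound with a constant depending only on $k$; this is precisely the estimate the paper later quotes from \cite{nagler2015joc} in the integral Schoenberg subsection. With that in hand your substitution and the inequality $2^r+1\le 2^{r+1}$ finish the argument.
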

In order to get concrete values, it would be very interesting to have an exact
representation of the eigenvalues of $\Sdk$.

\subsection{Lower estimate for the integral Schoenberg operator}
The integral Schoenberg operator is defined by
\begin{equation*}
  V_{{\Delta_n},k}f(x) := D \Sdkk F(x) = 
  \sum_{j=-k}^{n-1} \int_{\xikk{j-1}}^{\xikk{j}}f(t)\mathrm{d}t \frac{\bspk{j}(x)}{\xi_{j,k+1} - \xi_{j-1,k+1}},
\end{equation*}
where $F(x) = \int_0^xf(t)\mathrm{d}t$. More details are shown in
\citet{mueller1977}. We have that $\ker(V^m_{{\Delta_n},k} - I) =
\spanof\setof{1}$ and $D1 = 0$ holds. By \cite{nagler2014fr}, we can conclude that
\begin{equation*}
  \spec{V_{{\Delta_n},k}} \subset \uballo \cup \setof{1},
\end{equation*}
holds. The operator norm of the differential operator $D$, can be obtained
similarly to the Kantorovi{\v c} operator.
To this end, we utilize a similar relation as in
\eqref{eq:rel-bernstein-kantorovich} between the Schoenberg operator and its
counterpart for the $L^p$-spaces:
\begin{lemma}
  \label{lem:relation_sdk_isdk}
  For all \(f \in \spacedf{1}\) the relation
  \begin{equation*}
    D\Sdk f = \ISdkm{1} D f
  \end{equation*}
  holds.
\end{lemma}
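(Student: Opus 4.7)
The plan is to read the identity off essentially for free from the defining relation of the integral Schoenberg operator, together with the fact that $\Sdk$ reproduces constants. I set $g := Df$, which lies in $C[0,1]$ because $f \in \spacedf{1}$, and let $G(x) := \int_0^x g(t)\,dt$; by the fundamental theorem of calculus, $G = f - f(0)\mathbf{1}$.

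The definition of the integral Schoenberg operator given just above the lemma, with the index shift $k \mapsto k-1$, reads $\ISdkm{1} h = D\,\Sdk H$, where $H$ denotes the primitive $H(x) = \int_0^x h(t)\,dt$. Applied to $h = Df$, this yields $\ISdkm{1}\,Df = D\,\Sdk(f - f(0)\mathbf{1})$. Linearity of $\Sdk$ together with the partition-of-unity identity $\Sdk\mathbf{1} = \mathbf{1}$ gives $\Sdk(f - f(0)\mathbf{1}) = \Sdk f - f(0)\mathbf{1}$, and differentiating annihilates the constant, leaving $D\,\Sdk f$. The chain closes the identity.

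A direct computational alternative, useful as a sanity check, would apply the B-spline differentiation rule $D\bspk{j} = k\,\bspkm{j}{1}/(x_{j+k}-x_j) - k\,\bspkm{j+1}{1}/(x_{j+k+1}-x_{j+1})$ termwise to $D\,\Sdk f$, perform an Abel-style reindexing to collect coefficients of the shape $f(\xijk) - f(\xi_{j-1,k})$, invoke the Greville identity $\xijk - \xi_{j-1,k} = (x_{j+k}-x_j)/k$, and convert these differences into integrals of $Df$ via the fundamental theorem of calculus; one then recognizes the result termwise as the explicit B-spline formula for $\ISdkm{1}\,Df$. The only real difficulty on this direct route is bookkeeping the index range at the extended boundary of $\Delta_n$, which the antiderivative approach sidesteps entirely; consequently I do not expect any genuine obstacle in the main plan.
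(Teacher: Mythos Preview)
Your primary argument is correct and is essentially identical to the paper's proof: both apply the defining relation $V_{\Delta_n,k-1}\,h = D\,\Sdk\!\int_0^{\cdot} h$, use the fundamental theorem of calculus to get $f - f(0)$, and then invoke linearity and the reproduction of constants by $\Sdk$ to conclude. The sketched direct B-spline computation is extraneous here but harmless.
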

\begin{proof}
  Follows directly by the definition of the integral Schoenberg operator, as 
  \begin{equation*}
    V_{{\Delta_n},k}f(x) = D \Sdkk \int_{0}^xf(t)\mathrm{d}t.
  \end{equation*}
  Then a simple calculation yields
  \begin{equation*}
    \ISdkm{1}Df(x) = D \Sdk \int_{0}^x Df(t) \mathrm{d}t = D \Sdk (f(x) - f(0)) = D \Sdk f(x).
  \end{equation*}
  In the last step, we used the linearity of $\Sdk$ and that $\Sdk$ can reproduce constants.
\end{proof}
Now we can use this relation between $\ISdk$ and $\Sdk$ to derive
\begin{equation}
  \label{eq:le_ISdk_diff_bound}
  \normp{D \ISdk f} = \normp{D^2 \Sdkk F} = \norm{D^2}_{op:\im({\Sdkk})} \norm{f}_1,
\end{equation}
where $F(x) = \int_0^xf(t)\mathrm{d}t$. Using \eqref{eq:le_ISdk_diff_bound} and
the shown operator norm of $D^2$ on $\im(\Sdkk)$, see \cite{nagler2015joc}, we
obtain the following bound on $\im(\ISdk)$:
\begin{equation*}
  \norm{D}_{op:\im({\ISdk})} \leq \norm{D^2}_{op:\im({\Sdkk})} = \left(\frac{2(k+1)}{\meshmin}\right)^{2}d_{k+1}.
\end{equation*}
As all conditions of \autoref{cor:modulus_of_smoothnes_lower_bound_sharp} are
satisfied, we can state the following lower estimates:
\begin{corollary}
  Lower estimates for the integral Schoenberg opeartor $V_{{\Delta_n},k}$ are given by
  \begin{equation*}
    \frac1{6} \omega_{1,p}(f,t(\Delta_n, k)) \leq \normp{V_{{\Delta_n},k}f - f},
  \end{equation*}
  where
  \begin{equation*}
    t(\Delta_n, k) = \frac{\meshmin^2}{(k+1)^2}\cdot\left(\frac{1-\gamma_{{\Delta_n}, k}}{d_{k+1}}\right).
  \end{equation*}
\end{corollary}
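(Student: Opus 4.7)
The plan is to apply Theorem~\ref{thm:le_lower_bound_mos} (equivalently, Corollary~\ref{cor:modulus_of_smoothnes_lower_bound_discr}) to the integral Schoenberg operator with $r = 1$, $d = 1$, and a general $1 \leq p < \infty$, and then substitute the operator norm bound for $D|_{\im(\ISdk)}$ collected in the discussion immediately above the statement. The three hypotheses of the theorem have already been verified just prior to the corollary: the spectral inclusion $\sigma(\ISdk) \subset \uballo \cup \setof{1}$ follows from \cite{nagler2014fr}; the range lies in $W^{1,p}(\ivu)$ because every B-spline of degree $k \geq 1$ is continuously differentiable in the $L^p$-sense; and $D$ annihilates $\ker(\ISdk - I) = \spanof\setof{1}$ since $D1 = 0$.

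With these hypotheses in hand, Theorem~\ref{thm:le_lower_bound_mos} gives, for every $t > 0$,
\begin{equation*}
  \omega_{1,p}(f, t) \leq \left(2 + t \cdot \frac{\normop{D|_{\im(\ISdk)}}}{1-\gamma_{\Delta_n, k}}\right) \cdot \normp{\ISdk f - f}.
\end{equation*}
Next I would insert the bound
\begin{equation*}
  \normop{D|_{\im(\ISdk)}} \leq \left(\frac{2(k+1)}{\meshmin}\right)^{2} d_{k+1}
\end{equation*}
obtained from Lemma~\ref{lem:relation_sdk_isdk} combined with the operator norm of $D^2$ on $\im(\Sdkk)$ from \cite{nagler2015joc}. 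Choosing $t = t(\Delta_n, k) = \frac{\meshmin^2}{(k+1)^2}\cdot \frac{1-\gamma_{\Delta_n,k}}{d_{k+1}}$ is designed precisely so that the second summand in the parentheses reduces to $4$, leaving the bracket equal to $6$. Dividing by $6$ then produces exactly the stated inequality.

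Finally, I would briefly justify that $t(\Delta_n, k) \to 0$ whenever the approximation error vanishes, by appealing to the Remark following Corollary~\ref{cor:modulus_of_smoothnes_lower_bound_sharp}: strong convergence $\ISdk \to I$ forces $\gamma_{\Delta_n, k} \to 1$, and simultaneously refinement of the partition sends $\meshmin \to 0$. Neither direction poses a real difficulty; the only delicate point, and the main conceptual obstacle, is the derivation of the operator norm estimate for $D$ on $\im(\ISdk)$, but this has already been reduced via Lemma~\ref{lem:relation_sdk_isdk} to the known bound on $D^2|_{\im(\Sdkk)}$ from \cite{nagler2015joc}. Consequently the proof is essentially bookkeeping once the correct value of $t(\Delta_n, k)$ is identified.
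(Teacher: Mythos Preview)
Your proposal is correct and follows exactly the approach the paper takes: the paper simply states that all conditions of Corollary~\ref{cor:modulus_of_smoothnes_lower_bound_sharp} are satisfied and then records the resulting estimate, while you spell out the computation in more detail, in particular identifying that the specific choice of $t(\Delta_n,k)$ makes the second summand in Theorem~\ref{thm:le_lower_bound_mos} equal to $4$, whence the constant $2+4=6$. This is precisely the same pattern used for the Kantorovi{\v c} operator just before, so nothing further is needed.
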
 

\section{Remarks on the eigenvalues of the Schoenberg operator}
The eigenvalues of the Bernstein operator have been revealed already in
\citeyear{Calugareanu1966} by the Russian \citet{Calugareanu1966}. Up to
our knowledge results on the eigenvalues of the Schoenberg operator are not
known explicitly. In the following, we show that $1$ is a simple eigenvalue of
$V_{{\Delta_n},k}$ and all the other eigenvalues are distinct non-negative, real
numbers. Finally, we will show that the Schoenberg operator has the same
eigenvalues as $V_{{\Delta_n},k}$ with the exception that $1$ is not a simple
eigenvalue as the Schoenberg operator reproduces constants and linear
functions.

To simplify notation, we define the B-splines $\bspmk{j}$ for $j \in 
\setof{-k,\ldots, n-1}$ as in \cite{Curry:1966} by:
\begin{equation}
  \label{eq:def_bspmk}
  \bspmk{j}(x) := \frac{\bspk{j}(x)}{\xi_{j,k+1} - \xi_{j-1,k+1}}.
\end{equation}
Note that these functions are normalized to have integral one, i.e.   $\int_0^1
\bspmk{j}(x)\mathrm{d}x = 1$, and have finite support:
\begin{equation*}
  \supp \bspmk{j}(x) = \ivcc{x_j}{x_{j+k+1}}.
\end{equation*}
Using this notation, we state the following theorem:
\begin{theorem}
  The collocation matrix of the integral Schoenberg operator 
  with the normalized B-splines as defined in \eqref{eq:def_bspmk}
  \begin{equation}
    \label{eq:ISdk-collocation}
    \left(\int_{\xikk{i-1}}^{\xikk{i}} \bspmk{j}(t) \mathrm{d}t \right)_{ij}
  \end{equation}
  is an oscillatory matrix. Thus, all eigenvalues are distinct positive real numbers.
\end{theorem}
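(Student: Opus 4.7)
The plan is to apply the classical Gantmacher-Krein theorem, which guarantees that an oscillatory matrix has $n$ distinct, positive real eigenvalues. Recall that a totally nonnegative matrix is oscillatory if and only if it is nonsingular and both its sub- and superdiagonal entries are strictly positive. My strategy is therefore to verify in turn total nonnegativity, nonsingularity, and strict positivity of the adjacent-to-diagonal entries, and then invoke this criterion.

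First I would establish total nonnegativity by exploiting the total positivity of the B-spline basis. Writing $a_{ij} := \int_{\xikk{i-1}}^{\xikk{i}} \bspmk{j}(t)\,\mathrm{d}t$ and noting that the Greville abscissae are strictly increasing, so the intervals $I_i := \ivcc{\xikk{i-1}}{\xikk{i}}$ are essentially disjoint and ordered, multilinearity of the determinant in its rows gives, for any $i_1 < \cdots < i_r$ and $j_1 < \cdots < j_r$,
\begin{equation*}
  \det\left(a_{i_p j_q}\right)_{p,q=1}^{r} = \int_{I_{i_1}} \!\cdots\! \int_{I_{i_r}} \det\left(\bspmk{j_q}(t_p)\right)_{p,q=1}^{r} \mathrm{d}t_r \cdots \mathrm{d}t_1.
\end{equation*}
Since $t_1 < \cdots < t_r$ automatically on the domain of integration, the classical Karlin--Schoenberg result on total positivity of the B-spline basis forces the integrand to be nonnegative, so the matrix in \eqref{eq:ISdk-collocation} is totally nonnegative.

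Next I would argue nonsingularity using the Schoenberg--Whitney theorem: the determinant $\det(\bspmk{j_q}(t_p))$ is strictly positive precisely when each $t_p$ lies in the interior of $\supp \bspmk{j_p}$. A direct inspection of the Greville average shows that, for simple interior knots, both $\xikk{i-1}$ and $\xikk{i}$ lie in the open interval $\ivoo{x_i}{x_{i+k+1}}$, so the interlacing condition holds on a subset of positive measure of $I_1 \times \cdots \times I_n$ and the multiple integral representing the full $n \times n$ minor is strictly positive. The positivity of the sub- and superdiagonal entries $a_{i, i \pm 1}$ follows analogously because $\supp \bspmk{i \pm 1}$ overlaps $I_i$ in a set of positive Lebesgue measure. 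Once these three ingredients are in place, the Gantmacher--Krein criterion yields oscillatority and hence the claimed spectral structure.

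The main obstacle I anticipate is not the total positivity argument itself but the boundary bookkeeping: the extended knot sequence collapses with $x_{-k} = \cdots = x_0 = 0$ and $x_n = \cdots = x_{n+k} = 1$, and for the boundary B-splines the interior-support condition degenerates on one side. Verifying carefully that the Greville endpoints still lie strictly inside the support interval for every boundary index, or alternatively that the extremal rows and columns still carry a strictly positive minor once the degenerate inequalities have been dropped, is the delicate part of the argument that keeps the Gantmacher--Krein criterion applicable all the way to the edges of the matrix.
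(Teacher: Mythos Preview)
Your proposal is correct and follows essentially the same route as the paper: verify the Gantmacher--Krein criterion by checking total nonnegativity, nonsingularity, and positivity of the sub- and superdiagonal entries, then read off the spectral conclusion. The paper differs only in minor bookkeeping---it leaves total nonnegativity of the B-spline integral matrix as understood, argues nonsingularity directly from linear independence of the B-splines and of the integration functionals rather than via Schoenberg--Whitney, and handles your boundary concern through the explicit chain of inequalities $x_j < \xi_{j-1,k+1} < \xi_{j,k} < \xi_{j,k+1} < x_{j+k+1}$ between knots and Greville nodes.
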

\begin{proof}
  Recall, that the Greville nodes $\xik{j}$ are defined as the knot averages as in \eqref{eq:vdo_schoenberg_greville} by
  \begin{equation*}
  \xi_{j,k} := \frac{x_{j+1} + \cdots + x_{j+k}}{k}.
  \end{equation*}
  First, note that the relations
  \begin{align*}
    &x_j < \xikk{j-1} < \xik{j} < \xikk{j} < x_{j+k+1},\\
    &x_{j+1} < \xikk{j} < \xik{j+1} < \xikk{j+1} < x_{j+k+2} 
  \end{align*}
  and
  \begin{align*}
    \supp \bspmk{j} = \ivcc{x_j}{x_{j+k+1}},\qquad \supp \bspmk{j+1} =
    \ivcc{x_{j+1}}{x_{j+k+2}}
  \end{align*}
  hold. From the continuity of $\bspmk{j}$ and $\bspmk{j+1}$ and the relations
  \begin{equation*}
    \bspmk{j}(\xik{j}) > 0, \quad \bspmk{j+1}(\xikk{j}) > 0,\quad \bspmk{j}(\xikk{j}) > 0,
  \end{equation*}
  we can follow that
  \begin{equation*}
    \int_{\xikk{j-1}}^{\xikk{j}} \bspmk{j}(t) \mathrm{d}t > 0, \int_{\xikk{j-1}}^{\xikk{j}} \bspmk{j+1}(t) \mathrm{d}t > 0, \text{ and}
    \int_{\xikk{j}}^{\xikk{j+1}} \bspmk{j}(t) \mathrm{d}t > 0
  \end{equation*}
  holds. Moreover, the matrix 
  \begin{equation*}
    \left(\int_{\xikk{i-1}}^{\xikk{i}} \bspmk{j}(t) \mathrm{d}t \right)_{ij}
  \end{equation*}
  is non-singular as the B-splines $\bspmk{j}(x)$ are linearly independent and
  so are the functionals $\int_{\xikk{i-1}}^{\xikk{i}}\cdot\, \mathrm{d}t$ due
  to their distinct support. Using the well known result of \cite[Thm.~10,
  p.100]{gantmacher2002}, which states that a totally positive matrix \(A \in
  \Rr^{n \times n}\) is oscillatory if and only if
  \(A\) is non-singular and  \(a_{i,i+1} > 0\), \(a_{i+1,i} > 0\) for $i \in
  \setof{1,\ldots,n-1}$,
  we can conclude that the collocation matrix is
  oscillatory.  By \cite[Thm.~6, p.87]{gantmacher2002}
  it follows that the eigenvalues of the collocation matrix
  \eqref{eq:ISdk-collocation} are
  distinct positive real numbers, i.e.,
  \(\lambda_1 > \lambda_2 > \cdots > \lambda_n > 0\).  
\end{proof}

Now, we can use this property to show that the eigenvalues of the Schoenberg
operator are non-negative, real numbers. Additionally, the only eigenvalue with
multiplicity two is $1$, whereas all the others have multiplicity
one.\index{Schoenberg operator!eigenvalues}
\begin{theorem}
  \label{thm:schoenberg-eigenvalues-distinct}
  The eigenvalues of the Schoenberg operator are characterized by
  \begin{equation*}
    1 = \lambda_{0} = \lambda_{1} > \lambda_{2} > \cdots > \lambda_{n+k-1} > \lambda_{n+k} = 0.
  \end{equation*}
  Thus, besides \(0\) and\, \(1\) the Schoenberg operator has \(n+k-1\) distinct positive real eigenvalues. 
\end{theorem}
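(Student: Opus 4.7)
The plan is to reduce the spectrum of $\Sdk$ to that of $\ISdkm{1}$ via the commutation identity $D\Sdk f = \ISdkm{1} Df$ from \autoref{lem:relation_sdk_isdk}. The preceding theorem already established that the collocation matrix of $\ISdkm{1}$ is oscillatory, so $\ISdkm{1}$ has $n+k-1$ distinct positive real eigenvalues; one of them is $1$ (since $\ISdkm{1}$ reproduces constants), leaving $n+k-2$ of them strictly inside $\ivoo{0}{1}$.

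Because $\Sdk$ reproduces both constants and linear polynomials, the linearly independent functions $1$ and $x$ lie in $\ker(\Sdk - \id)$ and force $\lambda = 1$ to have geometric multiplicity at least $2$. For $\lambda \notin \setof{0, 1}$ I would set up a bijection between eigenpairs of $\Sdk$ and of $\ISdkm{1}$. In the forward direction, $\Sdk f = \lambda f$ with $\lambda \neq 0$ places $f = \lambda^{-1}\Sdk f$ in $\im(\Sdk)$, so $f$ is a spline of degree $k$; since $\lambda \neq 1$ forbids $f$ from being constant, $Df \not\equiv 0$, and \autoref{lem:relation_sdk_isdk} yields $\ISdkm{1} Df = \lambda Df$. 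For the converse, start from $\ISdkm{1} g = \mu g$ with $\mu \notin \setof{0, 1}$; lifting $g$ to $h(x) := \int_0^x g(t)\,dt$ produces a spline of degree $k$ with $Dh = g$, so $D(\Sdk h - \mu h) = 0$ and $\Sdk h - \mu h = c$ for some constant $c$. Then $f := h + c/(\mu - 1)$ satisfies $\Sdk f = \mu f$ thanks to $\Sdk 1 = 1$, and $f \neq 0$ because $Df = g \neq 0$.

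The bijection is injective in both directions and preserves eigenvalues, so the $n+k-2$ non-unit eigenvalues of $\ISdkm{1}$ in $\ivoo{0}{1}$ transport to $n+k-2$ distinct simple eigenvalues of $\Sdk$ in the same interval. Combined with $\lambda = 1$, the total geometric multiplicity among the positive eigenvalues of $\Sdk$ reaches $n+k$, which already saturates the $(n+k)$-dimensional range $\im(\Sdk)$; this forces $\Sdk$ to be diagonalizable on its range, $\lambda = 1$ to have algebraic multiplicity exactly $2$, and every other positive eigenvalue to be simple. Finally, $0 \in \sigma(\Sdk)$ follows from the finite-rank property of $\Sdk$ on the infinite-dimensional space $\spacecf$. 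The main technical obstacle I anticipate is justifying the lifting step cleanly, in particular that the antiderivative of a degree-$(k-1)$ spline on $\Delta_n$ is again a spline of degree $k$ on the same extended knot sequence, and that the subsequent shift by $c/(\mu-1)$ keeps the candidate eigenvector inside $\im(\Sdk)$.
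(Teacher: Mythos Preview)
Your approach is correct and shares the paper's core idea: transport the spectrum of $\Sdk$ to that of $\ISdkm{1}$ via the commutation identity $D\Sdk = \ISdkm{1} D$, then invoke the oscillatory-matrix result for $\ISdkm{1}$. The differences are in execution. The paper only runs the forward direction (eigenfunction $s$ of $\Sdk$ yields eigenfunction $Ds$ of $\ISdkm{1}$) and then closes by a terse linear-independence count; you supply the converse explicitly by antidifferentiating an eigenfunction $g$ of $\ISdkm{1}$ and correcting by the constant $c/(\mu-1)$. Your route is arguably cleaner, since it makes the bijection between eigenspaces visible rather than implicit in a dimension count. For the eigenvalue $0$, the paper exhibits a concrete eigenfunction (the polynomial vanishing at all Greville nodes), whereas you appeal to the finite rank of $\Sdk$ on the infinite-dimensional space $\spacecf$; both are valid, and the paper's construction has the minor advantage of producing an explicit eigenvector.

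Regarding the technical obstacle you flag: the worry is unnecessary. Once you have $\Sdk f = \mu f$ with $\mu \neq 0$, the relation $f = \mu^{-1}\Sdk f$ already forces $f \in \im(\Sdk)$, so you need not verify separately that the antiderivative-plus-constant lands in the spline space. The antiderivative of a degree-$(k-1)$ spline on $\Delta_n$ is indeed a degree-$k$ spline on the same knot sequence (piecewise polynomial of one higher degree with one additional order of smoothness at the knots), but even this is not strictly required for your argument to go through.
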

\begin{proof}
  We use that $\ISdkm{1}$ has $n+k-1$ distinct positive eigenvalues 
  combined with the eigenvalue $0$ coming from the finite-dimensional range of $\ISdkm{1}$
  and \autoref{lem:relation_sdk_isdk} saying that
  \begin{equation*}
    D\Sdk f = \ISdkm{1} D f
  \end{equation*}
  holds for all $f \in \spacecf$.

  We show first that $0 \in \sigma_p(\Sdk)$. To this end, let $f \in \spacecf$ be a
  function, such that
  \begin{equation*}
    f(\xi_j) = 0 \qquad \text{for all } j \in \setof{-k, \ldots, n-1} 
  \end{equation*}
  and such that there exists $x \in \ivcc{0}{1} \setminus \cset{\xi_j}{j \in
    \setof{-k,\ldots,n-1}}$ with $f(x) \neq 0$.  For example,
  consider the polynomial $f(x) =
  \prod_{i=-k}^{n-1}(x-\xi_i)$. Clearly, $f \in \spacecf$ and we
  obtain $\Sdk f = 0 \cdot f = 0$, because for all $x \in \ivcc{0}{1}$
  \begin{equation*}
    \Sdk f(x) = \sum_{j=-k}^{n-1}\left[\prod_{i=-k}^{n-1}(\xi_j-\xi_i)\right]\bspk{j}(x) = 0.
  \end{equation*}

  We now construct the set of eigenvalues and eigenfunctions of $\Sdk$ by their relation
  to the integral Schoenberg operator $\ISdkm{1}$.
  To this end, let us consider now an eigenfunction $s \in \spacepp$ of $\Sdk$ 
  corresponding to some eigenvalue $\lambda \in \sigma_p(\Sdk) \setminus \setof{0,1}$. 
  Then we calculate
  \begin{equation*}
   \ISdkm{1} D s = D\Sdk s = \lambda D s.
  \end{equation*}
  This states in particular 
  that the eigenvalue $\lambda \neq 0$ of the Schoenberg operator 
  with corresponding eigenfunction $s$ is again an eigenvalue of $\ISdkm{1}$ with
  associated eigenfunction $Ds$. 
  The only exception yields the eigenfunction $1$. Here, we obtain
  \begin{equation*}
    \ISdk D 1 = D 1 = 0.
  \end{equation*}
  Therefore, $0 = D 1$ does not yield a new linear independent eigenfunction of $\ISdkm{1}$.
  Whereas, the eigenfunction $x$ corresponding to the eigenvalue $1$ is mapped to the constant
  eigenfunction $1$:
  \begin{equation*}
    \ISdk D x = D \Sdk x = D x = 1.
  \end{equation*}
  Now we use that all eigenfunctions $s_1,\ldots,s_{n+k-1}$ of $\Sdk$
  corresponding to the eigenvalues $\lambda_1,\ldots,\lambda_{n+k-1}$ are
  linearly independent, to conclude that the same holds true for the functions
  $Ds_1,\ldots,Ds_{n+k-1}$.  
  Consequently, the positive numbers $\lambda_1,\ldots,\lambda_{n+k-1}$ are exactly the $n+k-1$
  distinct eigenvalues of $\Sdk$.
\end{proof}

\bibliographystyle{plainnat}
\bibliography{references.bib}

\end{document}